\documentclass[reqno,11pt]{amsart}

\usepackage{a4wide}
\usepackage{color}
\usepackage{mathrsfs}
\usepackage{mathtools}
\usepackage{amsmath}
\usepackage{amssymb}
\usepackage{bbm}
\usepackage{esint}
\numberwithin{equation}{section}
\usepackage[colorlinks,citecolor=green,linkcolor=red]{hyperref}

\usepackage[latin1]{inputenc}

\newcommand{\1}{\mathbbm 1}
\newcommand{\nchi}{{\raise.3ex\hbox{\(\chi\)}}}
\newcommand{\N}{\mathbb{N}}

\newcommand{\R}{\mathbb{R}}
\newcommand{\sfd}{{\sf d}}
\renewcommand{\d}{{\mathrm d}}
\newcommand{\X}{{\rm X}}

\newcommand{\mm}{\mathfrak{m}}
\newcommand{\lip}{{\rm lip}}

\newcommand{\FV}{{\rm FV}}

\newcommand{\fr}{\penalty-20\null\hfill\(\blacksquare\)}

\newtheorem{theorem}{Theorem}[section]

\newtheorem{lemma}[theorem]{Lemma}
\newtheorem{proposition}[theorem]{Proposition}
\newtheorem{definition}[theorem]{Definition}
\newtheorem{remark}[theorem]{Remark}

\begin{document}

\title[Decomposition of functions of bounded variation in Ahlfors-regular PI spaces]{Decomposition of functions of bounded variation \\ in Ahlfors-regular PI spaces}

\author[Paolo Bonicatto]{Paolo Bonicatto}
\address{Dipartimento di Matematica, Universit\`{a} di Trento, Via Sommarive 14, 38123 Trento, Italy}
\email{paolo.bonicatto@unitn.it}

\author[Panu Lahti]{Panu Lahti}
\address{Academy of Mathematics and Systems Science, Chinese Academy of Sciences, Beijing 100190, PR China}
\email{panulahti@amss.ac.cn}

\author[Enrico Pasqualetto]{Enrico Pasqualetto}
\address{Department of Mathematics and Statistics, P.O.\ Box 35, FI-40014 University of Jyvaskyla}
\email{enrico.e.pasqualetto@jyu.fi}

\keywords{Monotone function, BV function, indecomposable set, PI space.}
\subjclass[2020]{26B30, 26B35, 30L99}

\date{\today}
\maketitle

\begin{center}
\small\textit{Dedicated to Nageswari Shanmugalingam on the occasion of her 60th birthday}
\end{center}
\begin{abstract}
We prove that, on a non-compact Ahlfors-regular PI space, every function of bounded variation $f$ can be decomposed
as a countable sum $\sum_{n=1}^\infty f_n$ of monotone functions of bounded variation $f_n$, in such a way that the
total variation of $f$ is the sum of the total variations of the functions $f_n$. Our result extends a Euclidean
result of Bianchini and Tonon to the metric setting.
\end{abstract}
\section{Introduction}
One of the basic principles in geometric measure theory is that functions of bounded variation can be studied through the geometry of their level sets. The Coarea formula makes this principle precise: the total variation measure of a BV function is obtained by integrating the perimeter measures of its superlevel sets. Thus, structural information on sets of finite perimeter can, at least in principle, be transferred to BV functions.

The purpose of this paper is to exploit this principle in the setting of PI spaces, by which we mean \emph{doubling}
metric measure spaces supporting a \emph{weak \((1,1)\)-Poincar\'{e} inequality} \cite{HKST}. Starting with \cite{Miranda},
the theory of BV functions and sets of finite perimeter has been developed in the metric setting, both in PI spaces (see,
e.g.\ \cite{Ambrosio01,Ambrosio02,KKST,Lahti20}, to name but a few) and in general metric measure spaces (see, e.g.,
\cite{AmbrosioDiMarino,AmbPasSha}).
Relying on the decomposition of sets of finite perimeter into \emph{indecomposable components} established in
\cite{ACMM} and \cite{BPR,L-decom} for Euclidean and PI spaces, respectively,
we prove that, on a non-compact Ahlfors regular PI space, every function of finite variation (with enough integrability) can be decomposed into a countable sum of \emph{monotone} BV functions, in such a way that both the function and its variation measure are decomposed without loss.

Here, monotonicity is understood in a measure-theoretic sense: a BV function $u$ is called monotone if, for almost every $t\in\mathbb R$, both $\{u>t\}$ and $\{u\leq t\}$ are indecomposable sets of finite perimeter. This notion is a natural
geometric measure theory
counterpart of topological monotonicity.
In the real line, the classical Jordan decomposition asserts that every BV function is the difference of two (ordinary) monotone functions. In higher dimensions, where there is no analogous order structure, connectedness or indecomposability properties of level sets provide the appropriate replacement. 

Let us state our main theorem.
\begin{theorem}\label{thm:main}
Let \((\X,\sfd,\mm)\) be an Ahlfors \(q\)-regular PI space with \(q>1\) and \(\mm(\X)=\infty\).
Let \(f\in{\rm FV}(\X)\) be given. Then there exists a sequence of monotone functions
\((f_n)_{n\in\N}\subseteq\FV(\X)\) such that
\[
f=\sum_{n=1}^\infty f_n\,\text{ absolutely in }L^{q/(q-1)}(\X)
\]
and
\[
|Df|=\sum_{n=1}^{\infty}|Df_n|\quad\text{ as finite Borel measures on }\X.
\]
\end{theorem}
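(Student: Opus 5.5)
Following Bianchini--Tonon, we decompose the level sets of \(f\) and then rebuild monotone pieces via the coarea formula. By linearity and the splitting \(f=f^+-f^-\) (with \(|Df|=|Df^+|+|Df^-|\), which follows from coarea), we may first reduce to the case \(f\ge 0\), or handle the two half-lines \(t>0\) and \(t<0\) separately. For \(f\in\FV(\X)\) with \(\mm(\X)=\infty\) and Ahlfors \(q\)-regularity, the Sobolev inequality gives \(f\in L^{q/(q-1)}\) after subtracting a constant. We then take the representation
\[
f(x)=\int_0^\infty \1_{E_t}(x)\,\d t-\int_{-\infty}^0 \1_{\X\setminus E_t}(x)\,\d t,\qquad E_t=\{f>t\},
\]
together with coarea, \(|Df|=\int_\R P(E_t,\cdot)\,\d t\). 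For a.e.\ \(t\), \(E_t\) has finite perimeter and finite measure (\(t>0\)), or finite-measure complement (\(t<0\)).

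Next we decompose each \(E_t\) into indecomposable components (the decomposition theorem of \cite{BPR,L-decom}), \(E_t=\bigcup_i E_t^i\) with \(P(E_t,\cdot)=\sum_i P(E_t^i,\cdot)\). To make the pieces monotone, each component must also have indecomposable complement. We therefore ``fill holes'': in an Ahlfors-regular non-compact space with \(q>1\), the complement of an indecomposable finite-measure set splits into one unbounded-measure component and finitely or countably many bounded ones (holes). Saturating \(E_t^i\) by its holes yields simple sets \(F\), meaning that both \(F\) and \(\X\setminus F\) are indecomposable, and \(\1_{E_t^i}\) becomes an alternating countable sum of indicators of simple sets, with perimeters adding up. This is the metric analogue of the Ambrosio--Caselles--Masnou--Morel ``Jordan boundary'' decomposition. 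The case \(q>1\) is used here through the isoperimetric inequality, which guarantees that exactly one complementary component has infinite measure.

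The core step, and the main obstacle, is organising these simple sets into countably many families indexed by \(n\), each nested and monotone in \(t\). Two requirements must hold: for fixed \(n\) the sets \(F^n_t\) are decreasing in \(t\), so that \(f_n=\int \1_{F^n_t}\,\d t\) (with the sign conventions) has \(\{f_n>t\}=F^n_t\), which is simple, making \(f_n\) monotone; and \(t\mapsto F^n_t\) is measurable. Following Bianchini--Tonon, we would exploit the tree structure: for \(s<t\), each simple component at level \(t\) is contained (up to null sets) in a unique simple component at level \(s\), since saturations of nested sets are nested. This produces a measurable family of trees. We then extract branches greedily, choosing for instance at each level the component of largest measure, which works because the measures are summable and the components at a level are countable. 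This gives a countable decomposition of the tree into monotone paths \(t\mapsto F^n_t\). Measurability would be handled by a countable dense set of levels, combined with \(L^1\)-continuity from the right of \(t\mapsto E_t\). Each path has to be cut where it dies, and a constant may need to be subtracted.

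Finally, the sums must be checked. Pointwise, \(\sum_n f_n=f\), because for a.e.\ \(t\) the indicators \(\1_{F^n_t}\) (with signs) sum to \(\1_{E_t}\). By coarea applied to each \(f_n\), \(|Df_n|=\int P(F^n_t,\cdot)\,\d t\), so \(\sum_n|Df_n|=\int\sum_n P(F^n_t,\cdot)\,\d t=\int P(E_t,\cdot)\,\d t=|Df|\) by monotone convergence. Absolute convergence in \(L^{q/(q-1)}\) then follows from the Sobolev inequality \(\|f_n\|_{L^{q/(q-1)}}\le C|Df_n|(\X)\), since \(\sum_n|Df_n|(\X)=|Df|(\X)<\infty\). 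Here \(\mm(\X)=\infty\) is used to normalise each \(f_n\) to vanish at infinity.
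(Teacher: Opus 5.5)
There is a genuine gap. Your level-wise analysis is fine: splitting \(f=f^+-f^-\), decomposing a.e.\ superlevel set into essential connected components, and writing each component as its saturation minus its holes with additive perimeter measures. But the step you call the core one is asserted rather than proved, and the mechanism you sketch does not work as stated.

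First, the tree structure you invoke (for \(s<t\), each piece at level \(t\) lies in a piece at level \(s\)) holds for saturations, but holes run the other way. Since \(\X\setminus\{f>t\}\) grows with \(t\), holes grow and merge as \(t\) increases. A hole at level \(s\) is only contained in some essential connected component of \(\X\setminus\{f>t\}\) for \(t>s\), and that component may even be the one of infinite measure. So the negative pieces do not fit into decreasing families \(t\mapsto F^n_t\), and you would need a separate, inverted organisation for them, which you do not address.

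Second, even for saturations, choosing ``at each level the component of largest measure'' does not produce a nested family. Over a continuum of levels there is no step-by-step recursion. Moreover, the largest descendant at level \(t\) may shrink quickly while a smaller sibling persists, so the largest descendant at a level \(t'>t\) need not lie in it. Nothing in your sketch shows that countably many extracted branches exhaust every node of the tree at a.e.\ level. Yet that is exactly what you use to get the signed identity \(\sum_n\pm\1_{F^n_t}=\1_{E_t}\) and \(\sum_nP(F^n_t,\cdot)=P(E_t,\cdot)\). The tree can have uncountably many maximal chains, so this countability and exhaustion is the real content of the theorem.

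The paper's missing idea is never to organise the whole tree. It proves a single extraction step. For \(f\neq 0\), fix a component \(C_{t_0}\) of \(\{f>t_0\}\), and let \(C_t\) be the component of \(\{f>t\}\) containing it for \(t<t_0\); these are automatically nested, since one only moves downward. Set \(g=\phi_S\) with \(S_t={\rm sat}(C_t)\). Then \(g\) is monotone and nonzero. Two applications of the additivity lemma give \(|Df|=|D(f-g)|+|Dg|\): one with \(\sigma=1\) for \(C_t\subseteq\{f>t\}\), and one with \(\sigma=-1\) for \(C_t\subseteq S_t\), which pushes the holes into the remainder \(f-g\).

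Exhaustion is then obtained abstractly in the Banach space \((\FV(\X),\|\cdot\|_{\FV(\X)})\) by a transfinite iteration using \({\sf DC}_{\omega_1}\). Additivity of the norms forces \(\sum_\beta\|w_\beta\|_{\FV(\X)}\leq\|f\|_{\FV(\X)}\), so only countably many pieces are nonzero. Beyond them the remainder must vanish, since otherwise one more nonzero piece could be extracted. This replaces both your branch selection and your treatment of holes. Your final summation step (coarea for each piece and the isoperimetric bound for absolute convergence) then matches the paper's conclusion.
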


In the above statement, \({\rm FV}(\X)\) denotes the space of all functions \(f\in L^{q/(q-1)}(\X)\)
whose total variation \(|Df|(\X)\) is finite; see \eqref{eq:def_FV}. We recall
that a series \(v=\sum_{n=1}^\infty v_n\) in a Banach space is \emph{absolutely convergent} if
\(\sum_{n=1}^\infty\|v_n\|<+\infty\) and \(\|\sum_{n=1}^N v_n-v\|\to 0\) as \(N\to\infty\).

The last identity is an essential part of the statement. It says that the decomposition creates no cancellation at the level of the variation. Thus the functions $f_n$ are not merely summable in a Banach-space sense but they also provide a splitting of the variation measure of $f$.

The proof combines two main ingredients. The first is the decomposition theory for sets of finite perimeter in PI spaces. If $E$ is a set of finite perimeter, then $E$ admits an essentially unique decomposition into maximal indecomposable components $(E_i)_i$ and it holds
\[
    P(E,\cdot)=\sum_i P(E_i,\cdot).
\]
This allows us to decompose almost every superlevel set $\{f>t\}$ into its essential connected components. The terminology is not merely suggestive: under an additional assumption (called the \emph{two-sidedness property}) on the space, we recently proved that indecomposable sets are precisely the connected sets, up to the choice of a suitable representative,
in the \emph{1-fine topology} and,
moreover, the essential connected components coincide with the $1$-fine connected components, see the related work~\cite{BLP}.

The second ingredient is the Coarea formula. Since
\[
    |Df|=\int_{\mathbb R}P(\{f>t\})\,\mathrm dt,
\]
a decomposition of the superlevel sets which is compatible as $t$ varies can be integrated in the level parameter to obtain a decomposition of the function itself. The main technical difficulty is precisely this compatibility: the essential connected components at different levels do not form an a priori ordered family. We therefore introduce an abstract class of nested families of sets and associate with each such family a BV function whose superlevel sets are the prescribed ones.

The key idea of the proof is to show that from a function $f\neq 0$ with finite total variation one can
extract a non-zero monotone function $g$ such that $|Df|=|D(f-g)|+|Dg|$.
The function \(g\) is constructed from a suitable family of indecomposable
components of the superlevel sets of \(f\), which are then saturated to
ensure monotonicity. Similarly to the Euclidean argument underlying one of
the proofs in~\cite{BG-structure}, an abstract decomposition procedure in
a Banach space allows this extraction to be iterated and yields
the desired countable decomposition.
\medskip

In the Euclidean setting, decompositions into
monotone components and the geometry of level sets of BV or Lipschitz functions have proved useful in the analysis of transport and continuity equations. The work of Alberti, Bianchini and Crippa~\cite{ABC-structure,ABC-continuity} introduced a geometric analysis of the level sets of Lipschitz Hamiltonians and used it to study uniqueness for the continuity equation. Related ideas were subsequently developed for steady and autonomous nearly incompressible vector fields, see ~\cite{BG-steady, BBG}. More recently, after establishing a monotone decomposition for weakly differentiable functions \cite{GK-decomposition}, Gusev and Korobkov used it to study chain-rule and renormalization properties and uniqueness for the continuity equation in connection with Nelson's conjecture~\cite{GK-Nelson}. Finally, a general decomposition result for BV functions on Euclidean spaces was obtained by Bianchini and Tonon \cite{BT},
and later revisited in \cite{BG-structure}.

We emphasize that the proof is entirely metric. It uses only the decomposability of sets of finite perimeter, the Coarea formula and the isoperimetric inequality of the ambient space. The argument therefore works at the level of generality naturally associated with PI spaces, where a robust first-order calculus and a rich theory of BV functions are available, but no smooth structure is assumed.
\subsection*{Acknowledgements}
E.P.\ was supported by the Research Council of Finland grant 362898. This work was supported by the Italian Ministry of University and Research (MUR) through the FIS2 project BRIDGE: ``Beyond Regularity In Differential Geometric Evolutions", project code FIS-2023-02224 (CUP: E53C25001790001).
\subsection*{AI usage statement}
No AI tools were used in the preparation of this manuscript.
\section{Functions of bounded variation in metric measure spaces}
\subsection{Definition via relaxation and basic properties}
In this paper, by a metric measure space \((\X,\sfd,\mm)\) we mean a complete separable metric space \((\X,\sfd)\)
together with a Borel measure \(\mm\) on \(\X\) that is boundedly finite, i.e.\ \(\mm(B)<\infty\)
for every bounded Borel set \(B\subseteq\X\). Given an open set \(\Omega\subseteq\X\), we denote by
\({\rm LIP}_{loc}(\Omega)\) the set of all locally-Lipschitz functions \(f\colon\Omega\to\R\),
and by \({\rm LIP}_{bs}(\Omega)\) the set of all boundedly-supported Lipschitz functions \(f\colon\Omega\to\R\).
For any \(f\in{\rm LIP}_{loc}(\Omega)\), we define the asymptotic slope function
\(\lip_a(f)\colon\Omega\to[0,\infty)\) as
\[
\lip_a(f)(x)\coloneqq\varlimsup_{y,z\to x}\frac{|f(y)-f(z)|}{\sfd(y,z)}\quad\text{ for every }x\in\X.
\]
It can be readily checked that \(\lip_a(f)\) is upper semicontinuous, thus Borel measurable.
\medskip

Following \cite{Miranda}, we consider the following definition of a metric BV function.
\begin{definition}[Function of bounded variation]
Let \((\X,\sfd,\mm)\) be a metric measure space. Given \(f\in L^1_{loc}(\X)\) and an open
set \(\Omega\subseteq\X\), we define the total variation of \(f\) on \(\Omega\) as
\[
|Df|(\Omega)\coloneqq\inf\bigg\{\varliminf_{n\to\infty}\int_\Omega\lip_a(f_n)\,\d\mm\;\bigg|\;
(f_n)_n\subseteq{\rm LIP}_{loc}(\Omega),\,f_n\to f|_\Omega\text{ in }L^1_{loc}(\Omega)\bigg\}.
\]
\end{definition}

If \(|Df|(\X)<\infty\), then there exists a unique finite outer-regular Borel measure \(|Df|\) on \(\X\)
that extends \(\Omega\mapsto|Df|(\Omega)\). We say that \(|Df|\) is the total variation measure of \(f\).
\medskip

Once BV functions have been introduced, we have a notion of a set of finite perimeter:
\begin{definition}[Set of finite perimeter]
Let \((\X,\sfd,\mm)\) be a metric measure space and \(E\subseteq\X\) a Borel set. Then we say that \(E\) is
a set of finite perimeter if \(|D\1_E|(\X)<\infty\). We call \(P(E)\coloneqq|D\1_E|(\X)\) the perimeter of \(E\)
and \(P(E,\cdot)\coloneqq|D\1_E|\) the perimeter measure of \(E\).
\end{definition}

We collect some properties of functions of bounded variation and sets of finite perimeter:
\begin{itemize}
\item If \(f,g\in L^1_{loc}(\X)\) with \(|Df|(\X),|Dg|(\X)<\infty\) are given, then \(|D(f+g)|\leq|Df|+|Dg|\).
\item It holds that \(|D(\lambda f)|=|\lambda||Df|\) for every \(\lambda\in\R\).
\item \textbf{Submodularity.} It holds that \(|D(f\vee g)|+|D(f\wedge g)|\leq|Df|+|Dg|\).
\item It holds that \(|Df|=|Df^+|+|Df^-|\). Indeed, on the one hand submodularity yields
\[
|Df^+|+|Df^-|=|D(f\vee 0)|+|D(-(f\wedge 0))|=|D(f\vee 0)|+|D(f\wedge 0)|\leq|Df|,
\]
while on the other hand \(f=f^+-f^-\) and thus \(|Df|=|D(f^+-f^-)|\leq|Df^+|+|Df^-|\).
\item \textbf{Leibniz rule.} If \(\eta\in{\rm LIP}_{bs}(\X)\), then \(|D(\eta f)|\leq|\eta||Df|+|f|\lip_a(\eta)\mm\).
\item \textbf{Lower semicontinuity.} If \(f_n\to f\) in \(L^1_{loc}(\X)\), then \(|Df|(\X)\leq\varliminf_n|Df_n|(\X)\).
\item If \(E\subseteq\X\) is a set of finite perimeter, then \(\X\setminus E\) is a set of finite perimeter and
\[
P(\X\setminus E,\cdot)=P(E,\cdot).
\]
\end{itemize}
\begin{theorem}[Fleming--Rishel coarea formula]
Let \((\X,\sfd,\mm)\) be a metric measure space and \(f\in L^1_{loc}(\X)\). Then
\(\R\ni t\mapsto P(\{f>t\})\in[0,\infty]\) is Borel measurable. Moreover, it holds that \(|Df|(\X)<\infty\)
if and only if \(\int_\R P(\{f>t\})\,\d t<+\infty\). In this case, for any Borel set \(E\subseteq\X\)
the function \(\R\ni t\mapsto P(\{f>t\},E)\in[0,\infty)\) is Borel measurable and it holds that
\[
|Df|(E)=\int_\R P(\{f>t\},E)\,\d t.
\]
\end{theorem}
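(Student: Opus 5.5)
We prove the Fleming--Rishel coarea formula in three stages: the inequality ``$\le$'' on open sets via lower semicontinuity, the inequality ``$\ge$'' on open sets via an approximation and the coarea formula for locally Lipschitz functions, and finally the extension to Borel sets by measure-theoretic arguments.

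\textbf{Measurability.} Fix an open set \(\Omega\subseteq\X\) and consider \(m(t)\coloneqq P(\{f>t\},\Omega)\). Since \(t\mapsto\1_{\{f>t\}}\) is continuous from the right in \(L^1_{loc}\), lower semicontinuity of the variation (applied on \(\Omega\)) gives
\[
m(t)\le\varliminf_{s\downarrow t}m(s).
\]
We would try to upgrade this to Borel measurability of \(m\) by writing \(m\) as the supremum over a countable family of quantities depending continuously on \(t\); alternatively, one can represent \(|D\1_{\{f>t\}}|(\Omega)\) through a countable dense family of Lipschitz test data. This step is routine but genuinely technical.

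\textbf{The inequality \(\int_\R m(t)\,\d t\le|Df|(\Omega)\).} Take \(f_n\in{\rm LIP}_{loc}(\Omega)\) with \(f_n\to f\) in \(L^1_{loc}\) and \(\int_\Omega\lip_a f_n\,\d\mm\to|Df|(\Omega)\). For locally Lipschitz \(g\) we need
\[
\int_\R P(\{g>t\},\Omega)\,\d t\le\int_\Omega\lip_a g\,\d\mm.
\]
To prove it, approximate \(\1_{\{g>t\}}\) by the truncations \(\big(\min\{1,(g-t)/\varepsilon\}\big)^+\). Their slope is \(\varepsilon^{-1}\lip_a g\) on \(\{t<g<t+\varepsilon\}\) (up to the boundary values). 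Integrating in \(t\), using Fatou's lemma, and letting \(\varepsilon\to0\) gives the claim.

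Next, since \(\int_{\R}\|\1_{\{f_n>t\}}-\1_{\{f>t\}}\|_{L^1(K)}\,\d t=\|f_n-f\|_{L^1(K)}\), after passing to a subsequence we have \(\1_{\{f_n>t\}}\to\1_{\{f>t\}}\) in \(L^1_{loc}\) for a.e.\ \(t\). Lower semicontinuity and Fatou's lemma then give ``\(\le\)''.

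\textbf{The inequality \(|Df|(\Omega)\le\int_\R m(t)\,\d t\).} This is the main obstacle. Assume the right-hand side is finite. First treat \(f\) bounded, say with values in \([0,1]\); the general case follows by truncation together with lower semicontinuity. Discretize:
\[
f_N\coloneqq\frac1N\sum_{k=1}^N\1_{\{f>s+k/N\}},
\]
with \(s\in(-1/N,0)\) chosen by averaging so that \(\frac1N\sum_k m(s+k/N)\le\int_\R m(t)\,\d t+o(1)\). Subadditivity of the variation gives \(|Df_N|(\Omega)\le\frac1N\sum_k m(s+k/N)\). Moreover \(f_N\to f\) uniformly, so lower semicontinuity on \(\Omega\) yields the inequality.

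Once the two inequalities hold on all open sets, finiteness of \(|Df|(\X)\) is equivalent to \(\int_\R P(\{f>t\})\,\d t<\infty\).

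\textbf{Extension to Borel sets.} For finite total variation, each \(P(\{f>t\},\cdot)\) is a finite Borel measure for a.e.\ \(t\). The class of Borel \(E\) for which \(t\mapsto P(\{f>t\},E)\) is measurable and the identity holds contains the open sets, which form a \(\pi\)-system. It is closed under proper differences and increasing unions, by monotone convergence and finiteness. Dynkin's \(\pi\)-\(\lambda\) theorem then extends the formula to all Borel sets.
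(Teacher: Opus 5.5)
Your outline is essentially the standard proof of this result; the paper gives no argument of its own and simply refers to Miranda. The ingredients are the same:
\begin{itemize}
\item the Lipschitz coarea inequality via the truncations $\min\{1,(g-t)/\varepsilon\}^+$;
\item a.e.-in-$t$ convergence of superlevel sets, combined with lower semicontinuity and Fatou, for $\int_\R P(\{f>t\},\Omega)\,\d t\le|Df|(\Omega)$;
\item the Riemann-sum discretization with subadditivity, uniform convergence, lower semicontinuity and then truncation, for the reverse inequality;
\item a $\pi$--$\lambda$ argument to pass from open to Borel sets.
\end{itemize}
These steps are sound. In the averaging step you do not even need an $o(1)$: the identity $\int_{-1/N}^0\sum_{k=1}^N m(s+k/N)\,\d s=\int_0^1 m(t)\,\d t$ yields some $s$ with $\frac1N\sum_k m(s+k/N)\le\int_0^1 m(t)\,\d t$.

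The one step you leave open is the Borel measurability of $m(t)=P(\{f>t\},\Omega)$, and it is not optional. It is part of the statement, and you use it in both applications of Fatou's lemma and in the averaging choice of $s$. The route you suggest, a countable family of test data, is not something the relaxation definition provides, since that definition is an infimum over approximating sequences.

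You do not need it: the right lower semicontinuity $m(t)\le\varliminf_{s\downarrow t}m(s)$ that you already proved is enough. If $m(t)>a$, then $m>a$ on some $[t,t+\delta_t)$, so $U=\{m>a\}$ is a union of such half-open intervals. Set $V\coloneqq\bigcup_{t\in U}(t,t+\delta_t)$, which is open, and $R\coloneqq U\setminus V$. Any two points $t<t'$ of $R$ satisfy $t'\ge t+\delta_t$, so the intervals $(t,t+\delta_t)$, $t\in R$, are pairwise disjoint and $R$ is countable. Hence $U=V\cup R$ is Borel.

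Alternatively, $t\mapsto\1_{\{f>t\}}$ is continuous in $L^1_{loc}$ at every $t$ with $\mm(\{f=t\})=0$. By $\sigma$-finiteness this excludes only countably many $t$, so $m$ is lower semicontinuous off a countable set and therefore Borel. With either argument inserted, your proof is complete.
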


We refer to \cite{Miranda} for proofs of the above results. We also record the following result,
which will be useful later, together with its proof for the reader's convenience.
\begin{proposition}[Truncations]\label{prop:TV_conv_truncated_functions}
Let \((\X,\sfd,\mm)\) be a metric measure space. Let \(f\in L^1_{loc}(\X)\) be such that \(|Df|(\X)<\infty\).
Define \(f_n\coloneqq(f\wedge n)\vee(-n)\) for every \(n\in\N\). Then it holds that
\[
|D(f-f_n)|(\X)\to 0\quad\text{ as }n\to\infty.
\]
\end{proposition}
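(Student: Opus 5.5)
The plan is to use the Fleming--Rishel coarea formula on \(g_n\coloneqq f-f_n\). First I identify the superlevel sets of \(g_n\). If \(f(x)>n\), then \(f_n(x)=n\) and \(g_n(x)=f(x)-n>0\). If \(f(x)<-n\), then \(g_n(x)=f(x)+n<0\). Otherwise \(g_n(x)=0\). Hence, up to null sets,
\[
\{g_n>t\}=\{f>n+t\}\ \text{ for } t\geq 0,\qquad \{g_n>t\}=\{f>t-n\}\ \text{ for } t<0 .
\]
Note that for \(t<0\), \(\{g_n>t\}=\{f>t-n\}\) holds exactly: \(g_n>t\) fails precisely when \(f<-n\) and \(f+n\le t\). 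So \(g_n=(f-n)^+-(f+n)^-\), and all of its level sets are level sets of \(f\), merely shifted.

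Next I check that \(g_n\) has finite total variation. By the listed properties, \(|Dg_n|\le|Df|+|Df_n|\). Since \(f_n=(f\wedge n)\vee(-n)\), submodularity (or simply the coarea formula) gives \(|Df_n|(\X)\le|Df|(\X)\). Alternatively, one can read finiteness directly from the coarea integral below, because the coarea formula states the equivalence between finite variation and finiteness of \(\int_\R P(\{g_n>t\})\,\d t\).

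Now apply the coarea formula to \(g_n\), using the identification of its superlevel sets:
\[
|Dg_n|(\X)=\int_0^\infty P(\{f>n+t\})\,\d t+\int_{-\infty}^0 P(\{f>t-n\})\,\d t
=\int_{n}^{\infty}P(\{f>s\})\,\d s+\int_{-\infty}^{-n}P(\{f>s\})\,\d s .
\]
Since \(\int_\R P(\{f>s\})\,\d s=|Df|(\X)<\infty\), both tails tend to \(0\) as \(n\to\infty\) by dominated convergence, which yields \(|D(f-f_n)|(\X)\to0\).

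The only delicate point is the a.e.\ identification of the superlevel sets at the boundary values. At \(t=0\) we have \(\{g_n>0\}=\{f>n\}\), which is consistent with the formula for \(t\ge0\). Moreover, a single value of \(t\) is irrelevant for the integral anyway, so no real obstacle arises.
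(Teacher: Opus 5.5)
Your proof is correct and follows the paper's argument: the superlevel sets of \(f-f_n\) are shifted superlevel sets of \(f\), so the coarea formula gives the tail integral \(\int_{\R\setminus[-n,n]}P(\{f>s\})\,\d s\), which tends to \(0\) by dominated convergence. The only difference is that you apply the coarea formula directly to \(f-f_n\) and obtain an equality, whereas the paper first splits \(f-f_n=\1_{\{f>n\}}(f-n)+\1_{\{f<-n\}}(f+n)\) and uses subadditivity, which yields the same tail integral as an upper bound.
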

\begin{proof}
Note that \(f-f_n=\1_{\{f>n\}}(f-n)+\1_{\{f<-n\}}(f+n)\). Then the coarea formula yields
\[\begin{split}
|D(f-f_n)|(\X)&\leq|D(\1_{\{f>n\}}(f-n))|(\X)+|D(\1_{\{f<-n\}}(f+n))|(\X)\\
&=\int_\R P(\{\1_{\{f>n\}}(f-n)>t\})\,\d t+\int_\R P(\{\1_{\{f<-n\}}(f+n)>t\})\,\d t\\
&=\int_0^\infty P(\{f>n+t\})\,\d t+\int_{-\infty}^0 P(\{f>-n+t\})\,\d t\\
&=\int_{\R\setminus[-n,n]}P(\{f>t\})\,\d t.
\end{split}\]
Since \(\int_\R P(\{f>t\})\,\d t=|Df|(\X)<\infty\), by applying the dominated convergence theorem
we conclude that \(|D(f-f_n)|(\X)\leq\int_{\R\setminus[-n,n]}P(\{f>t\})\,\d t\to 0\) as \(n\to\infty\).
\end{proof}
\subsection{BV functions via derivations}
Given a metric measure space \((\X,\sfd,\mm)\), we denote
\[
{\rm BV}(\X)\coloneqq\{f\in L^1(\X):|Df|(\X)<\infty\}.
\]
Di Marino obtained in \cite{DiMar:14} an equivalent definition -- in terms of a suitable notion of Lipschitz derivation
-- of the space \({\rm BV}(\X)\) and of the total variation measures of its elements. We denote by
\({\rm Der}^\infty_\infty(\X)\) the collection of all linear operators \(b\colon{\rm LIP}_{bs}(\X)\to L^\infty(\X)\)
such that:
\begin{itemize}
\item \textbf{Leibniz rule.} It holds that \(b(fg)=g\,b(f)+f\,b(g)\) for every \(f,g\in{\rm LIP}_{bs}(\X)\).
\item \textbf{Weak locality.} There exists a non-negative function \(G\in L^\infty(\X)\) such that
\[
|b(f)|\leq G\,\lip_a(f)\quad\text{ for every }f\in{\rm LIP}_{bs}(\X).
\]
The \(\mm\)-a.e.\ minimal such function \(G\) is denoted by \(|b|\).
\item \textbf{Divergence.} There exists a (unique) function \({\rm div}(b)\in L^\infty(\X)\) such that
\[
\int b(f)\,\d\mm=-\int f\,{\rm div}(b)\,\d\mm\quad\text{ for every }f\in{\rm LIP}_{bs}(\X).
\]
\end{itemize}
Given any \(b\in{\rm Der}^\infty_\infty(\X)\) and \(h\in L^\infty(\X)\), we define
\(hb\colon{\rm LIP}_{bs}(\X)\to L^\infty(\X)\) as
\[
(hb)(f)\coloneqq h\,b(f)\quad\text{ for every }f\in{\rm LIP}_{bs}(\X).
\]
For any \(h\in{\rm LIP}_{bs}(\X)\), we have \(hb\in{\rm Der}^\infty_\infty(\X)\), \(|hb|=|h||b|\) and \({\rm div}(hb)=h\,{\rm div}(b)+b(h)\).
\begin{remark}{\rm
In the Euclidean space \(\R^n\), endowed with the Euclidean distance and the Lebesgue measure, the elements of
\({\rm Der}^\infty_\infty(\R^n)\) can be identified with those \(L^\infty\)-vector fields \(v\colon\R^n\to\R^n\)
whose distributional divergence is in \(L^\infty(\R^n)\); see \cite[Theorem 5.6]{LucicGelli}.
\fr}\end{remark}

In the following result, whose validity is a consequence of \cite[Theorems 3.7 and 3.3]{DiMar:14},
we denote by \(\mathfrak M(\X)\) the Banach space of all finite signed Radon measures on \(\X\).
\begin{theorem}
Let \((\X,\sfd,\mm)\) be a metric measure space and \(f\in L^1(\X)\). Then \(f\in{\rm BV}(\X)\) if and only if there
exists a (unique) bounded linear operator \({\bf L}_f\colon{\rm Der}^\infty_\infty(\X)\to\mathfrak M(\X)\) such that:
\begin{itemize}
\item[\(\rm i)\)] \({\bf L}_f(hb)=h\,{\bf L}_f(b)\) for every \(h\in{\rm LIP}_{bs}(\X)\) and
\(b\in{\rm Der}^\infty_\infty(\X)\).
\item[\(\rm ii)\)] \({\bf L}_f(b)(\X)=-\int f\,{\rm div}(b)\,\d\mm\) for every \(b\in{\rm Der}^\infty_\infty(\X)\).
\end{itemize}
\end{theorem}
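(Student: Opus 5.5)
The plan is to prove the two implications separately. The forward one is an approximation argument. The converse rests on a duality formula for the total variation, and that duality formula is where the real difficulty lies.

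\emph{Forward direction.} Assume \(f\in{\rm BV}(\X)\). By the definition of \(|Df|\), choose \((f_n)_n\subseteq{\rm LIP}_{loc}(\X)\) with \(f_n\to f\) in \(L^1_{loc}(\X)\) and \(\int\lip_a(f_n)\,\d\mm\to|Df|(\X)\). Truncating and using a Lipschitz cutoff, we may assume \(f_n\in{\rm LIP}_{bs}(\X)\) and \(f_n\to f\) in \(L^1(\X)\). The Leibniz rule for \(|D\cdot|\) costs only \(\int|f|\lip_a(\eta_R)\,\d\mm\to0\) as the cutoffs \(\eta_R\) spread out.

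For \(b\in{\rm Der}^\infty_\infty(\X)\), set \(\mu_n\coloneqq b(f_n)\,\mm\). Weak locality gives \(|\mu_n|\le\||b|\|_{L^\infty}\lip_a(f_n)\,\mm\), so the total masses are uniformly bounded. For \(h\in{\rm LIP}_{bs}(\X)\), the Leibniz rule together with the definition of divergence gives
\[
\int h\,\d\mu_n=\int b(hf_n)\,\d\mm-\int f_n\,b(h)\,\d\mm=-\int f_n\big(h\,{\rm div}(b)+b(h)\big)\,\d\mm=-\int f_n\,{\rm div}(hb)\,\d\mm .
\]
The right-hand side converges to \(-\int f\,{\rm div}(hb)\,\d\mm\). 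Hence \(\mu_n\) converges when tested against \({\rm LIP}_{bs}\) functions, which form a dense class. The limit is therefore a well-defined measure \({\bf L}_f(b)\), independent of the chosen sequence, and it satisfies
\[
|{\bf L}_f(b)|\le\||b|\|_{L^\infty}\,|Df| .
\]
This is obtained by lower semicontinuity of the total mass on open sets, combined with the localized definition of \(|Df|(\Omega)\). The bound gives boundedness and tightness of \({\bf L}_f(b)\).

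Property i) follows directly from the identity \(\int h\,\d{\bf L}_f(b)=-\int f\,{\rm div}(hb)\,\d\mm\), applied with \(hh'\) in place of \(h\). Property ii) follows by taking \(h=\eta_R\uparrow1\). On the left we use tightness. On the right, \(\int f\,b(\eta_R)\,\d\mm\to0\), since \(|b(\eta_R)|\le\||b|\|_{L^\infty}\lip_a(\eta_R)\) with \(\lip_a(\eta_R)\le 1/R\), and \(f\in L^1\). Uniqueness of \({\bf L}_f\) is immediate from i) and ii), because these determine \(\int h\,\d{\bf L}_f(b)\) for every \(h\in{\rm LIP}_{bs}(\X)\).

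\emph{Converse direction.} Assume \({\bf L}_f\) exists and is bounded, with operator norm \(C\). Then
\[
\sup\Big\{-\int f\,{\rm div}(b)\,\d\mm\;:\;|b|\le1\Big\}\le C .
\]
It therefore suffices to prove the duality formula
\[
|Df|(\X)\le\sup\Big\{\int f\,{\rm div}(b)\,\d\mm\;:\;b\in{\rm Der}^\infty_\infty(\X),\ |b|\le1\Big\}.
\]
This is the main obstacle, because derivations have to be constructed abstractly; in a general space there are no vector fields to write down.

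I would argue by convex duality. Consider the convex lower semicontinuous functional \(f\mapsto|Df|(\X)\) on \(L^1\), the relaxation of \(g\mapsto\int\lip_a(g)\,\d\mm\). Since it is \(1\)-homogeneous, its Fenchel–Moreau biconjugate reduces it to a supremum of \(\int fw\,\d\mm\) over \(w\in L^\infty\) satisfying
\[
\int gw\,\d\mm\le\int\lip_a(g)\,\d\mm\quad\text{for all }g\in{\rm LIP}_{bs}(\X).
\]
For such a \(w\), apply Hahn–Banach to the sublinear map \(g\mapsto\int\lip_a(g)\,\d\mm\), or better to its local \(L^1\)-valued version. The goal is to produce a linear \(b\colon{\rm LIP}_{bs}(\X)\to L^\infty(\X)\) with \(|b(g)|\le\lip_a(g)\) and \(\int b(g)\,\d\mm=-\int gw\,\d\mm\), that is, \({\rm div}(b)=w\).

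The delicate points are two. First, one must obtain pointwise (\(L^\infty\)-valued) rather than merely integrated bounds; this requires working with the module structure and localizing via multiplication by \(h\). Second, one must verify the Leibniz rule for the constructed \(b\). I would first attempt this through the \(L^\infty\)-module Hahn–Banach theorem, applied to the cotangent-type module generated by the differentials of Lipschitz functions with the pointwise norm \(\lip_a\). This is exactly the route behind the cited results of Di Marino, and at this point I would defer to \cite[Theorems 3.7 and 3.3]{DiMar:14}.
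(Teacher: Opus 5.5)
The paper gives no argument for this statement. It imports it from \cite[Theorems 3.7 and 3.3]{DiMar:14}, and the latter also supplies the sharper bound \eqref{eq:ptwse_ineq_TV_BV_der}. You instead prove the ``only if'' half by approximation, and reduce the ``if'' half to the inequality \(|Df|(\X)\le\sup\{\int f\,{\rm div}(b)\,\d\mm:|b|\le 1\}\) for arbitrary \(f\in L^1(\X)\). Where you finally defer, you are exactly as complete as the paper.

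Your forward argument is the standard one, but three steps need repair.
\begin{itemize}
\item[(a)] Truncation and a Lipschitz cutoff do not turn a locally Lipschitz function into an element of \({\rm LIP}_{bs}(\X)\) when bounded sets are not precompact. In \(\ell^2\) there are bounded, boundedly supported, locally Lipschitz functions whose local Lipschitz constants are unbounded on the unit ball. In a general space you therefore need the genuinely nontrivial fact that optimal sequences can be taken in \({\rm LIP}_{bs}(\X)\). In PI spaces, which are proper, your argument works as written.
\item[(b)] Since \(\X\) need not be locally compact, convergence of \(\int h\,\d\mu_n\) for \(h\in{\rm LIP}_{bs}(\X)\) plus a mass bound does not by itself give a \(\sigma\)-additive limit; you need tightness of \((\mu_n)\). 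It follows from the narrow convergence \(\lip_a(f_n)\mm\rightharpoonup|Df|\), which holds by lower semicontinuity on open sets plus convergence of total masses, combined with \(|\mu_n|\le\||b|\|_{L^\infty}\lip_a(f_n)\mm\). Prokhorov's theorem then gives a narrow limit, which the test functions in \({\rm LIP}_{bs}(\X)\) identify.
\item[(c)] The bound \(|{\bf L}_f(b)|\le\||b|\|_{L^\infty}|Df|\) comes from this narrow convergence, not from the localized definition of \(|Df|(\Omega)\), which only yields lower bounds. Running the same argument with \(|\mu_n|\le h\lip_a(f_n)\mm\), for upper semicontinuous \(h\ge|b|\) truncated at \(\||b|\|_{L^\infty}\), recovers \eqref{eq:ptwse_ineq_TV_BV_der}. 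That sharper bound is what the paper actually uses in Lemma \ref{lem:additiv_TV}.
\end{itemize}

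For the converse, your first step presupposes that boundedness of \({\bf L}_f\) is measured by \(\||b|\|_{L^\infty}\) alone. The paper fixes no norm on \({\rm Der}^\infty_\infty(\X)\). If \(\|{\rm div}(b)\|_{L^\infty}\) entered the norm, the bound over \(\{|b|\le 1\}\) would not follow, so state this explicitly.

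Beyond that, the Hahn--Banach plan you sketch before deferring does close, and it would make your proof self-contained.
\begin{itemize}
\item Build the \(L^1(\mm)\)-normed \(L^\infty(\mm)\)-module obtained by completing formal sums \(\sum_i\1_{E_i}\,\d g_i\), \(g_i\in{\rm LIP}_{bs}(\X)\), with pointwise norm \(\sum_i\1_{E_i}\lip_a(g_i)\). Only subadditivity and homogeneity of \(\lip_a\) are needed, not locality.
\item The Leibniz rule \(\d(gh)=g\,\d h+h\,\d g\) holds in this module. Apply the pointwise inequality \(\lip_a(uv)\le|u|\lip_a(v)+|v|\lip_a(u)\) on a Borel partition on which \(g\) and \(h\) oscillate less than \(\varepsilon\).
\item For \(w\) in your polar set, the map \(\d g\mapsto-\int gw\,\d\mm\) is well defined with norm at most \(1\). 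Extend it by Hahn--Banach.
\item Represent the extension by an element \(T\) of the module dual with pointwise norm at most \(1\); this is possible because for \(L^1\)-normed modules the Banach dual and the module dual coincide.
\item Set \(b(g)\coloneqq T(\d g)\). Then \(b\) satisfies the Leibniz rule, \(|b(g)|\le\lip_a(g)\), and \(\int b(g)\,\d\mm=-\int gw\,\d\mm\). Hence \(b\in{\rm Der}^\infty_\infty(\X)\), \(|b|\le 1\) and \({\rm div}(b)=w\).
\end{itemize}
This direction does not need (a), because Fenchel--Moreau already gives \(|Df|(\X)\le\sup_w\int fw\,\d\mm\) over your (possibly larger) polar set, and that inequality is all you use.

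As for what each approach buys: the paper's citation gives brevity and the full package, including the localized Theorem \ref{thm:repr_per}. Your route, once repaired, gives a self-contained proof of the statement.
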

\begin{remark}{\rm
Fix any \(b\in{\rm Der}^\infty_\infty(\X)\) and an upper semicontinuous function
\(h\colon\X\to[0,\infty)\) such that \(|b|\leq h\) holds \(\mm\)-a.e.\ in \(\X\). Then we have that
\begin{equation}\label{eq:ptwse_ineq_TV_BV_der}
|{\bf L}_f(b)|\leq h|Df|\quad\text{ for every }f\in{\rm BV}(\X).
\end{equation}
The validity of the above claim follows from \cite[Theorem 3.3]{DiMar:14}.
\fr}\end{remark}

For any \(b\in{\rm Der}^\infty_\infty(\X)\), we set \({\rm spt}(b)\coloneqq{\rm spt}(|b|)\). Given
\(B\subseteq\Omega\subseteq\X\) with \(\Omega\) open, we write \(B\Subset\Omega\) if \(B\) is bounded
and \(\sfd(\X\setminus\Omega,B)>0\). The next result is taken from \cite[Theorem 3.4]{DiMar:14}:
\begin{theorem}\label{thm:repr_per}
Let \((\X,\sfd,\mm)\) be a metric measure space and \(f\in{\rm BV}(\X)\). Let \(\Omega\subseteq\X\)
be an open set. Then it holds that
\[
|Df|(\Omega)=\sup\bigg\{\int_\Omega f\,{\rm div}(b)\,\d\mm\;\bigg|\;b\in{\rm Der}^\infty_\infty(\X),\,
|b|\leq 1,\,{\rm spt}(b)\Subset\Omega\bigg\}.
\]
\end{theorem}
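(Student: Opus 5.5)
The inequality \(\geq\) follows directly from the properties of \({\bf L}_f\) and \eqref{eq:ptwse_ineq_TV_BV_der}. For \(\leq\), I would localize derivations that nearly realize \(|Df|\) using Lipschitz cut-offs. Throughout I rely on the identification, from \cite[Theorem 3.3]{DiMar:14}, of \(|Df|\) as the least measure dominating all \(|{\bf L}_f(b)|\) with \(|b|\leq 1\). I expect this identification to be the main obstacle. If it is not available in that form, I would fall back on a Hahn--Banach duality argument on the relaxed functional defining \(|Df|(\Omega)\); this is substantially harder and is the core of Di Marino's work.

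\emph{Inequality \(\geq\).} Fix \(b\in{\rm Der}^\infty_\infty(\X)\) with \(|b|\leq 1\) and \({\rm spt}(b)\Subset\Omega\). Let \(K\coloneqq{\rm spt}(b)\), which is closed. Set \(h\coloneqq\1_K\); it is upper semicontinuous, and \(|b|\leq h\) holds \(\mm\)-a.e. By \eqref{eq:ptwse_ineq_TV_BV_der} we get \(|{\bf L}_f(b)|\leq\1_K|Df|\). Since \({\rm div}(b)=0\) \(\mm\)-a.e.\ outside \(K\), property ii) of \({\bf L}_f\) gives
\[
\bigg|\int_\Omega f\,{\rm div}(b)\,\d\mm\bigg|=|{\bf L}_f(b)(\X)|\leq|Df|(K)\leq|Df|(\Omega).
\]
The fact that \({\rm div}(b)\) vanishes off \(K\) follows by testing the divergence identity with functions \(g\) supported in \(\X\setminus K\). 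For such \(g\) one has \(b(g)=0\): by weak locality, \(|b(g)|\leq|b|\lip_a(g)\), which vanishes \(\mm\)-a.e.

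\emph{Inequality \(\leq\).} I would use the identification above in the following form: for every \(\varepsilon>0\), there are finitely many pairwise disjoint open sets \(A_1,\dots,A_k\Subset\Omega\) with \(\sfd(A_i,A_j)>0\) for \(i\neq j\), and derivations \(b_i\) with \(|b_i|\leq 1\), such that
\[
\sum_i|{\bf L}_f(b_i)|(A_i)\geq|Df|(\Omega)-\varepsilon.
\]
Getting there uses inner regularity of the finite measure \(|Df|\) to pass to a compact set inside \(\Omega\). Refining the partition and replacing \(b_i\) by \(-b_i\) where needed, I may also assume that \({\bf L}_f(b_i)\geq-\varepsilon/k\) on \(A_i\) in total mass, so that its variation is essentially positive there.

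Next choose Lipschitz cut-offs \(\eta_i\in{\rm LIP}_{bs}(\X)\) with \(0\leq\eta_i\leq 1\) and \({\rm spt}(\eta_i)\subseteq A_i\), which approximate \(\1_{A_i}\) in \(L^1(|{\bf L}_f(b_i)|)\). Set \(b\coloneqq\sum_i\eta_ib_i\). Since the supports are disjoint, \(|b|\leq 1\) and \({\rm spt}(b)\Subset\Omega\). By property i) of \({\bf L}_f\) and then property ii),
\[
\int f\,{\rm div}(b)\,\d\mm=-\sum_i\int\eta_i\,\d{\bf L}_f(b_i).
\]
Choosing the sign conventions so that \(-{\bf L}_f(b_i)\) is nearly positive on \(A_i\), the right-hand side is at least \(|Df|(\Omega)-C\varepsilon\). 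Letting \(\varepsilon\to 0\) gives \(\leq\).
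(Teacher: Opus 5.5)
The paper gives no argument for this statement; it is imported verbatim from \cite[Theorem 3.4]{DiMar:14}. Your proof of \(\geq\) is complete using only what the paper records. Taking \(h=\1_{{\rm spt}(b)}\) in \eqref{eq:ptwse_ineq_TV_BV_der}, and showing via weak locality that \({\rm div}(b)=0\) \(\mm\)-a.e.\ outside \({\rm spt}(b)\), is exactly what is needed. Your localization for \(\leq\) is also sound measure theory: Hahn decomposition of each \({\bf L}_f(b_i)\) on the Borel pieces of a partition, compact inner approximation (each \(|{\bf L}_f(b_i)|\) is a finite Borel measure on a Polish space), thin open neighbourhoods making the wrong-sign mass on \(A_i\) small, cut-offs with disjoint supports, and \(b=\sum_i\eta_ib_i\). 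One fix: your two sign normalizations contradict each other. Choose the sign once, so that \(-{\bf L}_f(b_i)\) is almost nonnegative on \(A_i\), as the minus sign in ii) requires.

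What is missing is the input to \(\leq\). Everything rests on the measure inequality \(|Df|\leq\bigvee_{|b|\leq 1}|{\bf L}_f(b)|\), and nothing the paper quotes from \cite{DiMar:14} supplies it. Theorem 3.3 is used there for the existence of \({\bf L}_f\) and for \eqref{eq:ptwse_ineq_TV_BV_der}, which gives only the reverse bound \(\bigvee_{|b|\leq 1}|{\bf L}_f(b)|\leq|Df|\), i.e.\ your easy half. Moreover, the inequality you assume is equivalent to the statement itself. If the representation formula holds, then by the facts in your \(\geq\) argument every admissible \(b\) satisfies \(\int_\Omega f\,{\rm div}(b)\,\d\mm=-{\bf L}_f(b)(\Omega)\leq|{\bf L}_f(b)|(\Omega)\). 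Hence \(|Df|(\Omega)\leq\big(\bigvee_{|b|\leq1}|{\bf L}_f(b)|\big)(\Omega)\) for all open \(\Omega\), and outer regularity extends this to all Borel sets.

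So the proposal relocates the nontrivial content instead of proving it. Bounding the relaxed total variation from above by derivations is the hard half of Di Marino's identification of the two notions of total variation. Your Hahn--Banach fallback, where an actual proof would have to happen, is not carried out. As a reduction of the representation formula to the identity \(|Df|=\bigvee_{|b|\leq1}|{\bf L}_f(b)|\), your write-up is fine and depends on \cite{DiMar:14} no more than the paper does. You should state, though, that you are importing this full identity, not merely the bound the paper extracts from Theorem 3.3.
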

\subsection{Functions of bounded variation in PI spaces}
We focus our attention on a special class of metric measure spaces:
\begin{definition}[PI space]
We say that a metric measure space \((\X,\sfd,\mm)\) is a PI space provided it satisfies the following two conditions:
\begin{itemize}
\item[\(\rm i)\)] \textbf{Doubling condition.} There exists a constant \(C_D>0\) such that
\[
0<\mm(B(x,2r))\leq C_D\mm(B(x,r))\quad\text{ for every }x\in\X\text{ and }r>0.
\]
\item[\(\rm ii)\)] \textbf{Weak \(1\)-Poincar\'{e} inequality.} There exist constants \(C_P>0\) and \(\lambda\geq 1\) such that
\[
\fint_{B(x,r)}\bigg|f-\fint_{B(x,r)}f\,\d\mm\bigg|\,\d\mm\leq C_P r\fint_{B(x,\lambda r)}\lip_a(f)\,\d\mm
\]
for every \(f\in{\rm LIP}_{loc}(\X)\), \(x\in\X\) and \(r>0\).
\end{itemize}
\end{definition}

Furthermore, a metric measure space \((\X,\sfd,\mm)\) is said to be Ahlfors \(q\)-regular, for some exponent \(q\geq 1\),
provided there exists a constant \(C_A\geq 1\) such that
\[
\frac{1}{C_A}r^q\leq\mm(B(x,r))\leq C_A r^q\quad\text{ for every }x\in\X\text{ and }r\in(0,{\rm diam}(\X)).
\]
The above condition implies that \((\X,\sfd,\mm)\) is doubling with \(C_D=C_A^2 2^q\).
\medskip

The following result was proved in \cite[Theorem 4.5]{Miranda}.
\begin{theorem}[Isoperimetric inequality]
Let \((\X,\sfd,\mm)\) be an Ahlfors \(q\)-regular PI space with \(q>1\). Then there exists a constant \(C_I>0\),
depending only on \(q\) and \(C_P\), such that
\begin{equation}\label{eq:isoper_ineq}
\min\{\mm(E),\mm(\X\setminus E)\}\leq C_I P(E)^{q/(q-1)}\quad\text{ for every set of finite perimeter }E\subseteq\X.
\end{equation}
\end{theorem}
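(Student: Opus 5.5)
The plan is to deduce the global inequality \eqref{eq:isoper_ineq} from a \emph{relative} isoperimetric inequality on balls, combined with a covering argument that uses Ahlfors regularity. The exponent \(q/(q-1)>1\) enters only through an elementary superadditivity estimate.

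\textbf{Step 1 (relative isoperimetric inequality).} First extend the weak \(1\)-Poincar\'e inequality from locally Lipschitz functions to BV functions. Given \(f\) with \(|Df|(\X)<\infty\), take \(f_n\in{\rm LIP}_{loc}(B(x,\lambda r))\) with \(f_n\to f\) in \(L^1_{loc}\) and \(\int_{B(x,\lambda r)}\lip_a(f_n)\,\d\mm\to|Df|(B(x,\lambda r))\). Passing to the limit, with some care about the open set on which the Poincar\'e inequality is applied (slightly enlarging \(\lambda\) if needed), gives
\[
\fint_{B(x,r)}\Big|f-\fint_{B(x,r)}f\,\d\mm\Big|\,\d\mm\leq C_P r\,\frac{|Df|(B(x,\lambda r))}{\mm(B(x,\lambda r))}.
\]
Apply this with \(f=\1_E\). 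The left-hand side equals \(2\,\mm(B\cap E)\,\mm(B\setminus E)/\mm(B)^2\). If \(\mm(E\cap B(x,r))=\tfrac12\mm(B(x,r))\), Ahlfors regularity then yields \(r^q\lesssim r\,P(E,B(x,\lambda r))\), that is,
\[
r^{q-1}\leq C\,P(E,B(x,\lambda r)).
\]

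\textbf{Step 2 (choice of radii and covering).} If \(\X\) is bounded, then \(\mm(\X)<\infty\) and this case is handled separately. The main case is the unbounded one, where \(\mm(B(x,R))\geq C_A^{-1}R^q\to\infty\) as \(R\to\infty\).

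Assume first that \(\mm(E)<\infty\). For \(\mm\)-a.e.\ \(x\in E\) (Lebesgue points, available by doubling), the ratio \(\phi_x(r)=\mm(E\cap B(x,r))/\mm(B(x,r))\) tends to \(1\) as \(r\to0\) and to \(0\) as \(r\to\infty\). It is continuous in \(r\) because Ahlfors regularity rules out mass on spheres, or at least one can select \(r\) where \(\phi_x\) crosses \(1/2\) up to constants. So there is \(r_x\) with \(\phi_x(r_x)=1/2\). By Step 1, \(r_x\leq (CP(E))^{1/(q-1)}\), so the radii are uniformly bounded.

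The \(5r\)-covering lemma applied to \(\{B(x,\lambda r_x)\}\) gives disjoint balls \(B_i=B(x_i,\lambda r_i)\) with \(E\subseteq\bigcup_i B(x_i,5\lambda r_i)\), up to a null set. Therefore
\[
\mm(E)\lesssim\sum_i r_i^q=\sum_i\big(r_i^{q-1}\big)^{q/(q-1)}\leq\Big(\sum_i r_i^{q-1}\Big)^{q/(q-1)}\lesssim\Big(\sum_i P(E,B_i)\Big)^{q/(q-1)}\leq P(E)^{q/(q-1)}.
\]
Here I use \(\sum a_i^p\leq(\sum a_i)^p\) for \(p\geq1\) and the disjointness of the \(B_i\). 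If instead \(\mm(\X\setminus E)<\infty\), apply the same argument to \(\X\setminus E\), using \(P(\X\setminus E)=P(E)\).

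\textbf{Step 3 (both \(E\) and its complement of infinite measure; the main obstacle).} We must show that this case cannot occur when \(P(E)<\infty\). Fix \(x_0\) and look at \(\phi_{x_0}(R)\) as \(R\to\infty\).

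If \(\phi_{x_0}\) took a value in \([1/4,3/4]\) along a sequence \(R_k\to\infty\), Step 1 would give \(R_k^{q-1}\lesssim P(E)\), which is impossible since \(q>1\). By continuity, \(\phi_{x_0}(R)\) must then tend to \(0\) or to \(1\); by symmetry say \(0\). Since \(B(x,R)\) and \(B(x_0,R)\) are comparable for \(R\gg\sfd(x,x_0)\), the same holds at every \(x\). So crossing radii \(r_x\) exist for a.e.\ \(x\in E\), and the covering argument of Step 2, applied to \(E\cap B(x_0,\rho)\), gives \(\mm(E\cap B(x_0,\rho))\leq C_IP(E)^{q/(q-1)}\) uniformly in \(\rho\). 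Letting \(\rho\to\infty\) contradicts \(\mm(E)=\infty\).

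I expect this step, together with the continuity and crossing-radius selection, to need the most care; the rest is routine. Tracking constants through Steps 1–2 shows that \(C_I\) depends only on \(q\), \(C_P\) and the regularity constants.
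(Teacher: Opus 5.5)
Your argument is the standard covering proof (BV Poincar\'e inequality \(\Rightarrow\) relative isoperimetric inequality \(\Rightarrow\) crossing radii bounded by \(P(E)^{1/(q-1)}\) \(\Rightarrow\) Vitali lemma plus \(\sum_i a_i^{q/(q-1)}\leq(\sum_i a_i)^{q/(q-1)}\)). It can be completed, but one claim you rely on is false, and you use it twice.

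Ahlfors regularity does \emph{not} rule out mass on spheres, even in PI spaces. On \(\R^2\) with Lebesgue measure, put \(u(y)\coloneqq\min\{1,2|y|\}\) and \(\sfd(x,y)\coloneqq\max\{|x-y|,|u(x)-u(y)|\}\). Since \(|x-y|\leq\sfd(x,y)\leq 2|x-y|\), this is an Ahlfors \(2\)-regular PI space. Yet \(\{\sfd(0,\cdot)=1\}\) contains the annulus \(\{1/2\leq|y|\leq 1\}\). Take \(E\) to be the Euclidean disc of radius \(3/5\) about \(0\). For every \(x\) with \(|x|<1/20\) one finds \(\phi_x\equiv 1\) on \((0,1-2|x|]\) and \(\phi_x<1/2\) on \((1-2|x|,\infty)\). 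So a radius with \(\phi_x(r_x)=1/2\) fails to exist on a set of positive measure.

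Your fallback is the correct fix and should be the actual argument. Set \(\bar r\coloneqq\sup\{r:\phi_x(r)\geq 1/2\}\) and pick \(s_x\in(\bar r/2,\bar r]\) with \(\phi_x(s_x)\geq 1/2>\phi_x(2s_x)\). Doubling gives \(\mm(E\cap B(x,2s_x))\geq\mm(B(x,2s_x))/(2C_D)\) and \(\mm(B(x,2s_x)\setminus E)>\mm(B(x,2s_x))/2\). The relative inequality then yields \(s_x^{q-1}\lesssim P(E,B(x,2\lambda s_x))\), and the covering is run with the balls \(B(x,2\lambda s_x)\).

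In Step 3, ``by continuity'' must be replaced as well. Step 1 on \(B(x_0,R)\) gives \(\min\{\phi_{x_0}(R),1-\phi_{x_0}(R)\}\leq C_AC_PR^{1-q}P(E)\to 0\). Doubling forbids \(\phi_{x_0}(R)\leq\varepsilon\) together with \(\phi_{x_0}(R')\geq 1-\varepsilon\) (or the reverse) for \(R<R'\leq 2R\), once \(\varepsilon<1/(1+C_D)\). Chaining the intervals \([2^kR_0,2^{k+1}R_0]\) shows that a single alternative persists for all large \(R\), and then your argument goes through.

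The bounded case, which you only announce, works the same way. Keep the crossing radii \(\leq{\rm diam}(\X)\) (say with threshold \(3/4\), after reducing to \(\mm(E)\leq\mm(\X)/2\)), and let doubling replace the Ahlfors bounds above that scale.

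As for the route: the paper gives no proof of this statement and simply quotes it from Miranda. A common alternative is to first self-improve the weak \((1,1)\)-Poincar\'e inequality to the Sobolev--Poincar\'e inequality with exponent \(q/(q-1)\) (Haj\l asz--Koskela). Under Ahlfors regularity this inequality is scale-invariant and, for \(u=\1_E\), gives \(\min\{\mm(E\cap B_r),\mm(B_r\setminus E)\}^{(q-1)/q}\leq C\,P(E)\) uniformly in \(r\). Since \(r\mapsto\mm(E\cap B_r)\) and \(r\mapsto\mm(B_r\setminus E)\) are non-decreasing, letting \(r\to\infty\) yields the claim at once, including the exclusion of \(\mm(E)=\mm(\X\setminus E)=\infty\). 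That route needs no density points, crossing radii or continuity considerations.

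Your route avoids the self-improvement theorem, which itself rests on chaining, covering and truncation arguments. It needs only the relative inequality and the Vitali lemma, at the price of the radius selection and the separate Step 3.

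Your tracking of the constants is also the accurate one: \(C_I\) must depend on \(C_A\) as well, not only on \(q\) and \(C_P\). Rescaling \(\mm\) by a constant leaves \(q\) and \(C_P\) unchanged but changes the optimal \(C_I\).
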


The following result is well known to experts; however, in the absence of a suitable reference,
we include a detailed proof.
\begin{theorem}[Embedding theorem]\label{thm:BV_embed}
Let \((\X,\sfd,\mm)\) be an Ahlfors \(q\)-regular PI space with \(q>1\) and \(\mm(\X)=\infty\).
Let \(f\in L^1_{loc}(\X)\) be such that \(|Df|(\X)<\infty\). Then there exists a unique
number \(c_f\in\R\)
such that \(f-c_f\in L^{q/(q-1)}(\X)\), and it holds that
\begin{equation}\label{eq:BV_embed_ineq}
\|f-c_f\|_{L^{q/(q-1)}(\X)}\leq 2C_I|Df|(\X).
\end{equation}
Moreover, if in addition \(f\geq 0\) and \(\mm(\{f>t\})<\infty\) for a.e.\ \(t>0\), then \(c_f=0\).
\end{theorem}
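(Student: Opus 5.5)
The plan is to use the coarea formula together with the isoperimetric inequality \eqref{eq:isoper_ineq}, after reducing to the case of non-negative functions. Set \(p\coloneqq q/(q-1)\). Since \(|Df|=|Df^+|+|Df^-|\), both \(f^+\) and \(f^-\) have finite total variation, and it suffices to treat \(f\ge 0\).

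\textbf{Non-negative case.} For every \(t\), the superlevel set \(E_t\coloneqq\{f>t\}\) has finite perimeter for a.e.\ \(t\), by the coarea formula. Because \(\int_\R P(E_t)\,\d t<\infty\), we can find levels \(t\to\pm\infty\) along which \(P(E_t)\to 0\). The function \(t\mapsto\mm(E_t)\) is non-increasing. By \eqref{eq:isoper_ineq}, for a.e.\ \(t\) at least one of \(\mm(E_t)\), \(\mm(\X\setminus E_t)\) is at most \(C_IP(E_t)^{p}<\infty\), and both cannot be finite since \(\mm(\X)=\infty\). Hence
\[
t_0\coloneqq\inf\{t:\mm(E_t)<\infty\}
\]
separates two regimes: for a.e.\ \(t>t_0\) we have \(\mm(E_t)\le C_IP(E_t)^p\), and for a.e.\ \(t<t_0\) we have \(\mm(\X\setminus E_t)\le C_IP(E_t)^p\). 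We set \(c_f\coloneqq t_0\); it is finite because \(f\ge0\) gives \(E_t=\X\) for \(t<0\), so \(t_0\ge 0\). In the extra case of the statement we have \(\mm(E_t)<\infty\) for a.e.\ \(t>0\), so \(t_0=0\).

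\textbf{The \(L^p\) bound.} Write, pointwise,
\[
(f-t_0)^+=\int_{t_0}^\infty\1_{E_t}\,\d t,\qquad (f-t_0)^-=\int_{-\infty}^{t_0}\1_{\X\setminus E_t}\,\d t.
\]
Minkowski's integral inequality then gives
\[
\|(f-t_0)^+\|_{L^p}\le\int_{t_0}^\infty\mm(E_t)^{1/p}\,\d t\le C_I^{1/p}\int_{t_0}^\infty P(E_t)\,\d t,
\]
and the analogous estimate holds for the negative part. Adding the two and using the coarea formula,
\[
\|f-c_f\|_{L^p}\le C_I^{1/p}|Df|(\X).
\]
One then checks that \(C_I^{1/p}\le 2C_I\), or else one adjusts the constant conventions; a doubling factor appears anyway when passing from \(f\ge0\) to a general \(f\).

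\textbf{General case and uniqueness.} For general \(f\), set \(c_f\coloneqq c_{f^+}-c_{f^-}\). At most one of these is nonzero: if \(c_{f^+}>0\) then \(\{f>0\}\) has co-finite measure, so \(\{f^->s\}\) has finite measure for all \(s>0\). The bound for \(f-c_f\) follows from the triangle inequality, which is where the factor \(2\) enters. Uniqueness of \(c_f\) is immediate: if \(f-c\) and \(f-c'\) both lie in \(L^p\), then the constant \(c-c'\) lies in \(L^p(\X)\) with \(\mm(\X)=\infty\), forcing \(c=c'\).

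\textbf{Main obstacle.} The delicate step is justifying the dichotomy at \(t_0\) cleanly. It uses only monotonicity of \(t\mapsto\mm(E_t)\) and the fact that \(\mm(E_t)\) and \(\mm(\X\setminus E_t)\) cannot both be finite. A further point to check is that the constant from Minkowski's inequality matches \(2C_I\).
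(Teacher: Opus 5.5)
Your argument has a genuine gap where you declare \(t_0=\inf\{t:\mm(E_t)<\infty\}\) finite. The reason you give, namely \(E_t=\X\) for \(t<0\), only yields \(t_0\ge 0\). Nothing you wrote excludes \(t_0=+\infty\). That is the scenario in which \(\mm(\{f>t\})=\infty\), and hence \(\mm(\{f\le t\})<\infty\), for every \(t\): a non-negative function that ``tends to \(+\infty\) at infinity'' in measure. In that case \(c_f\) is undefined and your formula for \((f-t_0)^-\) is meaningless.

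Ruling this out is exactly the non-trivial first half of the paper's proof. There one replaces \(f\) by \(-f\) if needed, so that \(\mm(\{f>t_0\})<\infty\) for some \(t_0\). This makes the threshold automatically \(<+\infty\), and the work goes into proving \(c_f\neq-\infty\) via the isoperimetric inequality and \(\mm(\X)=\infty\). Your reduction to \(f\ge 0\) makes the lower bound free but moves the whole difficulty to the upper bound. So your ``main obstacle'' paragraph is mistaken: monotonicity of \(t\mapsto\mm(E_t)\) and the fact that \(\mm(E_t)\), \(\mm(\X\setminus E_t)\) cannot both be finite do not suffice. The hypothesis \(\mm(\X)=\infty\) must be used quantitatively here.

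The repair uses a fact you recorded but never used. Suppose \(t_0=+\infty\). Then for a.e.\ \(t\) the isoperimetric inequality gives \(\mm(\{f\le t\})\le C_I P(E_t)^{q/(q-1)}\). Choosing finite-perimeter levels \(t_k\to+\infty\) with \(P(E_{t_k})\to 0\) forces \(\mm(\{f\le t_k\})\to 0\). On the other hand \(\{f\le t_k\}\uparrow\{f<\infty\}\), which has full measure since \(f\in L^1_{loc}(\X)\), so \(\mm(\{f\le t_k\})\to\mm(\X)=\infty\), a contradiction. This is the mirror image of the paper's display proving \(c_f\neq-\infty\).

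With this step added, the rest of your proposal is sound and slightly sharper than the paper. Minkowski's integral inequality replaces the layer-cake/rearrangement estimate. Since \(|Df^+|+|Df^-|=|Df|\), your gluing gives \(\|f-c_f\|_{L^{q/(q-1)}(\X)}\le C_I^{(q-1)/q}|Df|(\X)\) with no factor \(2\).

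The comparison \(C_I^{(q-1)/q}\le 2C_I\) cannot be ``checked'' for every admissible \(C_I\); it fails, e.g., for the sharp planar constant \(1/(4\pi)\). It holds after enlarging \(C_I\), e.g.\ to \(C_I\ge 1\). The paper's own chain, which writes \(C_I^{q/(q-1)}\) where \(C_I\) actually comes out, tacitly uses \(C_I\ge1\) too. So your hedge about adjusting constant conventions is the right one.
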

\begin{proof}
Since \(\{f>t\}\) is a set of finite perimeter for a.e.\ \(t\in\R\) by the coarea formula, it follows from \eqref{eq:isoper_ineq}
that for a.e.\ \(t\in\R\) we have either \(\mm(\{f>t\})<\infty\) or \(\mm(\{f\leq t\})<\infty\).
Up to replacing \(f\) with \(-f\), we can assume that \(\mm(\{f>t_0\})<\infty\) for some \(t_0\in\R\).
Since \(\R\ni t\mapsto\mm(\{f>t\})\in[0,\infty]\) is non-increasing, there exists \(c_f\in[-\infty,\infty)\) such that
\begin{equation}\label{eq:BV_embed_aux}\begin{split}
\mm(\{f\leq t\})<\infty=\mm(\{f>t\})&\quad\text{ for every }t\in(-\infty,c_f),\\
\mm(\{f>t\})<\infty=\mm(\{f\leq t\})&\quad\text{ for every }t\in(c_f,\infty).
\end{split}\end{equation}
We claim that \(c_f\neq-\infty\). To prove it, we argue by contradiction: assume that \(c_f=-\infty\),
so that \(\mm(\{f>t\})<\infty\) for every \(t\in\R\). Then \eqref{eq:isoper_ineq} and the coarea formula imply that
\[\begin{split}
\infty&=\mm(\X)^{(q-1)/q}=\lim_{\lambda\to-\infty}\mm(\{f>\lambda\})^{(q-1)/q}\leq
\varliminf_{\lambda\to-\infty}\int_{\lambda-1}^\lambda\mm(\{f>t\})^{(q-1)/q}\,\d t\\
&\leq C_I^{(q-1)/q}\varliminf_{\lambda\to-\infty}\int_{\lambda-1}^\lambda P(\{f>t\})\,\d t
\leq C_I^{(q-1)/q}|Df|(\X)<\infty,
\end{split}\]
thus leading to a contradiction. All in all, we have shown that \(c_f\in\R\).

Denote \(f_\pm\coloneqq(f-c_f)^\pm\). Applying the layer cake representation formula, \cite[Lemma 3.48]{AFP},
the isoperimetric inequality, \eqref{eq:BV_embed_aux} and the coarea formula, we obtain that
\[\begin{split}
\int f_\pm^{q/(q-1)}\,\d\mm&=\int_0^\infty\mm(\{f_\pm^{q/(q-1)}>t\})\,\d t=\frac{q}{q-1}\int_0^\infty\mm(\{f_\pm>t\})t^{1/(q-1)}\,\d t\\
&\leq\bigg(\int_0^\infty\mm(\{f_\pm>t\})^{(q-1)/q}\,\d t\bigg)^{q/(q-1)}\\
&\leq C_I^{q/(q-1)}\bigg(\int_0^\infty P(\{f_\pm>t\})\,\d t\bigg)^{q/(q-1)}\\
&=C_I^{q/(q-1)}|Df_\pm|(\X)^{q/(q-1)}\leq C_I^{q/(q-1)}|Df|(\X)^{q/(q-1)},
\end{split}\]
whence it follows that \(\|f-c_f\|_{L^{q/(q-1)}(\X)}\leq\|f_+\|_{L^{q/(q-1)}(\X)}+\|f_-\|_{L^{q/(q-1)}(\X)}\leq 2C_I|Df|(\X)\).
Since \(\mm(\X)=\infty\), we have that \(c_f\) is the unique real number satisfying \(f-c_f\in L^{q/(q-1)}(\X)\). Finally, if in addition \(f\geq 0\) and \(\mm(\{f>t\})<\infty\) for a.e.\ \(t>0\), then \eqref{eq:BV_embed_aux}
implies \(c_f=0\).
\end{proof}

Let \((\X,\sfd,\mm)\) be an Ahlfors \(q\)-regular PI space with \(q>1\) and \(\mm(\X)=\infty\). Then we define
\begin{equation}\label{eq:def_FV}
{\rm FV}(\X)\coloneqq\big\{f\in L^{q/(q-1)}(\X)\;\big|\;|Df|(\X)<\infty\big\}.
\end{equation}
Note that there is no need to specify the exponent \(q\) in the notation \(\FV(\X)\), since it is the unique value
for which \(\X\) is Ahlfors \(q\)-regular.
We also define \(\|f\|_{{\rm FV}(\X)}\coloneqq|Df|(\X)\) for every \(f\in{\rm FV}(\X)\). Theorem
\ref{thm:BV_embed} implies that
\begin{equation}\label{eq:equiv_norm_FV}
\|f\|_{{\rm FV}(\X)}\leq\|f\|_{L^{q/(q-1)}(\X)}+|Df|(\X)\leq(2C_I+1)\|f\|_{{\rm FV}(\X)}
\quad\text{ for every }f\in{\rm FV}(\X),
\end{equation}
whence it easily follows that \(({\rm FV}(\X),\|\cdot\|_{{\rm FV}(\X)})\) is a Banach space.
\subsection{Indecomposable sets and monotone functions}
We now recall the definition of an indecomposable set, first considered in the metric setting in \cite{BPR}.
\begin{definition}[Indecomposable set]
Let \((\X,\sfd,\mm)\) be a metric measure space. Then we say that a set of finite perimeter \(E\subseteq\X\)
is decomposable if there exist two disjoint sets of finite perimeter \(G,H\subseteq\X\) with \(\mm(G),\mm(H)>0\) such
that \(E=G\cup H\) and \(P(E)=P(G)+P(H)\). If \(E\) is not decomposable, then we say that it is indecomposable.
\end{definition}

The following decomposition result was established in \cite{BPR,L-decom}.
\begin{theorem}[Decomposition into indecomposable components]
Let \((\X,\sfd,\mm)\) be a PI space and \(E\subseteq\X\) a set of finite perimeter. Then there exists an
\(\mm\)-a.e.\ unique at most countable partition \(\{E_i:i\in I\}\) of \(E\) into maximal indecomposable
sets, which we call
the essential connected components of \(E\), such that \(\mm(E_i)>0\) for every \(i\in I\) and
\[
P(E,\cdot)=\sum_{i\in I}P(E_i,\cdot).
\]
The maximality of \(\{E_i:i\in I\}\) is intended in the following sense: given any indecomposable set \(F\subseteq E\),
there exists a (unique) index \(i_F\in I\) such that \(F\subseteq_\mm E_{i_F}\).
\end{theorem}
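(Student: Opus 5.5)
The statement just given is the decomposition theorem of \cite{BPR,L-decom}, which the paper quotes rather than proves. I would therefore reproduce the standard argument: first obtain a partition that saturates the perimeter and has a finiteness property, then show maximality and uniqueness. The only ingredients are the isoperimetric-type lower bound available in PI spaces, lower semicontinuity of the perimeter, and the submodularity properties recorded above.

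\textbf{Step 1: an isoperimetric lower bound.} The doubling condition and the Poincar\'e inequality give a relative isoperimetric inequality. From it one gets the following non-degeneracy: if \(F\subseteq E\) satisfies \(P(F)+P(E\setminus F)=P(E)\) and \(\mm(F)\leq\mm(E)/2\), then \(\mm(F)\) (or, when localised, its mass in a fixed ball) is controlled by \(P(F)\). In the Ahlfors-regular case this is just \eqref{eq:isoper_ineq}. In a general PI space I would localise to balls and use the doubling dimension.

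\textbf{Step 2: existence.} I would follow the Euclidean strategy of \cite{ACMM}.
\begin{itemize}
\item Consider all countable partitions \(\{F_j\}\) of \(E\) into sets of positive measure with \(\sum_j P(F_j)=P(E)\).
\item Among these, maximise the functional \(\sum_j \phi(\mm(F_j))\) for a suitable strictly concave \(\phi\), for instance \(\phi(t)=t^{\alpha}\) with \(\alpha<1\), or a localised weighted variant if \(\mm(E)=\infty\).
\item Step 1 makes this functional finite and shows that the pieces cannot become too small relative to their perimeter. Hence a maximising sequence of partitions is compact in \(L^1_{loc}\) after reordering.
\item Lower semicontinuity of the perimeter preserves the identity \(\sum_j P(F_j)\leq P(E)\). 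The reverse inequality is automatic by subadditivity, so the limit is again an admissible partition.
\item Each piece of a maximiser is indecomposable: otherwise, splitting it strictly increases the concave functional.
\item The measure identity \(P(E,\cdot)=\sum_i P(E_i,\cdot)\) follows from the total-mass equality, because \(P(E,\cdot)\leq\sum_i P(E_i,\cdot)\) holds as measures.
\end{itemize}

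\textbf{Step 3: maximality and uniqueness.} Take an indecomposable \(F\subseteq E\). The intersections \(F\cap E_i\) decompose \(F\) with additive perimeter. This uses the fact that the \(E_i\) split the perimeter of \(E\): by locality, \(P(F\cap E_i,\cdot)\) is concentrated on the parts of the boundaries of \(F\) and \(E_i\) that do not interact with the other components. Indecomposability of \(F\) then forces \(F\subseteq_\mm E_{i}\) for a single \(i\). Uniqueness follows by applying maximality symmetrically to two such partitions.

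\textbf{Main obstacle.} The hardest part is Step 3: proving \(P(F)=\sum_i P(F\cap E_i)\) in a metric space without reduced boundaries in the classical sense. I would first try Ambrosio's structure theory in PI spaces. It says perimeter measures are comparable to \(\mathcal H\) restricted to measure-theoretic boundaries, with density bounded below. Perimeter additivity for disjoint sets is then equivalent to \(\mathcal H\)-negligible intersection of their measure-theoretic boundaries, and this reduces the claim to a set-theoretic inclusion of boundaries.
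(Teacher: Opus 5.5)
The paper gives no proof of this statement; it quotes it from \cite{BPR,L-decom}, so I compare with that literature. Your Step 3 has a genuine gap, and the route you propose under ``Main obstacle'' cannot close it. Ambrosio's structure theory gives $P(E,\cdot)=\theta_E\,\mathcal H|_{\partial^*E}$ with $\alpha\le\theta_E\le C$. Two-sided bounds of this kind say nothing about how $\theta_{F\cup G}$ compares with $\theta_F+\theta_G$ on $\partial^*F\cap\partial^*G$. So they do not give the equivalence between additivity of the perimeter and $\mathcal H(\partial^*F\cap\partial^*G)=0$. In fact the implication you need, applied to $E_i$ and $E\setminus E_i$, is false in general PI spaces. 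Take the tripod made of three half-lines $L_1,L_2,L_3$ glued at a point $o$, with $\mm=w_l\cdot(\text{length})$ on $L_l$ and $w_3\ge w_1+w_2$; this is a PI space. Testing Lipschitz approximations of $\1_{L_l}$ gives $P(L_l,\cdot)=\min\{w_l,\sum_{k\neq l}w_k\}\,\delta_o$. Hence $P(L_1\cup L_2)=P(L_3)=w_1+w_2=P(L_1)+P(L_2)$, although $o\in\partial^*L_1\cap\partial^*L_2$ and $\mathcal H(\{o\})>0$. Comparing the densities of nested sets on their common boundary is exactly what the isotropicity assumption of \cite{BPR} provides, and this space is not isotropic. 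Your appeal to ``locality'' in Step 3 does not supply this comparison either.

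The missing idea is that no fine structure is needed: submodularity suffices. Let $E=G\sqcup H$ with $P(E)=P(G)+P(H)$, and let $F\subseteq E$ have finite perimeter. Apply $P(A\cup B)+P(A\cap B)\le P(A)+P(B)$ to the pairs $(F,G)$ and $(F\cup G,H)$. Since $(F\cup G)\cup H=E$ and $(F\cup G)\cap H=F\cap H$, adding the two inequalities, cancelling the finite term $P(F\cup G)$ and using $P(G)+P(H)=P(E)$ gives $P(F\cap G)+P(F\cap H)\le P(F)$, hence equality. Now take $G=E_i$ and $H=E\setminus E_i$. Here $P(E)=P(E_i)+P(E\setminus E_i)$ follows from $P(E)=\sum_jP(E_j)$ and countable subadditivity. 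If $F$ is indecomposable with $\mm(F)>0$, then $\mm(F\cap E_i)=0$ or $\mm(F\setminus E_i)=0$ for each $i$. So $F\subseteq_\mm E_i$ for exactly one $i$, and your symmetric uniqueness argument then goes through.

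Step 2 is the ACMM strategy and is sound in outline, but the input you state in Step 1 is too weak. A bound on $\mm(F)$ that is only linear in $P(F)$ controls neither $\sum_j\mm(F_j)^\alpha$ nor the tails of a maximising sequence, so mass could be lost in the limit. You need a superlinear local inequality: $\mm(F\cap B)^{(s-1)/s}\le C_B\,P(F,\lambda B)$ whenever $\mm(F\cap B)\le\mm(B)/2$, which follows from the Sobolev--Poincar\'e inequality with $s>1$ a doubling exponent. You also need $\alpha\in((s-1)/s,1)$ and, in a general PI space, weights in the functional adapted to an exhaustion of $\X$ by balls.
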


Let us recall some other notions and results that are related to indecomposability. Assume \((\X,\sfd,\mm)\)
is a PI space with \(\mm(\X)=\infty\), and \(E\subseteq\X\) is an indecomposable set. Then:
\begin{itemize}
\item We call holes of \(E\) those essential connected components of \(\X\setminus E\) of finite \(\mm\)-measure.
\item The saturation \({\rm sat}(E)\) of \(E\) is defined as the union of \(E\) and all its holes.
\item The saturation \({\rm sat}(E)\) is indecomposable \cite[Proposition 4.13(ii)]{L-decom}.
\item Assuming in addition that \((\X,\sfd,\mm)\) is Ahlfors \(q\)-regular for \(q>1\) and \(\mm(E)<\infty\),
we have that \(\X\setminus E\) has exactly one essential connected component \(G\) with \(\mm(G)=\infty\); see
\cite[Proposition 3.14]{BPR} (in light of \cite{L-decom}, the isotropicity assumption can be dropped).
Hence, we have \(G=\X\setminus{\rm sat}(E)\), thus \(\mm({\rm sat}(E))<\infty\) by the isoperimetric inequality.
Moreover, if \(F\subseteq\X\) is an indecomposable set with \(F\subseteq_\mm E\), then \({\rm sat}(F)\subseteq_\mm{\rm sat}(E)\).
\end{itemize}
\begin{definition}[Monotone function of bounded variation]
Let \((\X,\sfd,\mm)\) be a metric measure space. Let \(f\in L^1_{loc}(\X)\) be such that \(|Df|(\X)<\infty\).
Then we say that \(f\) is monotone provided the sets \(\{f>t\}\) and \(\{f\leq t\}\) are indecomposable
for a.e.\ \(t\in\R\).
\end{definition}
\section{Decomposition of BV functions}
\subsection{Superlevel sets}
Let \((\X,\sfd,\mm)\) be a metric measure space. Given Borel sets \(A,B\subseteq\X\), we write
\(A=_\mm B\) to indicate that \(\mm(A\Delta B)=0\). Likewise, we write \(A\subseteq_\mm B\) if
\(A\setminus B=_\mm\varnothing\).
\medskip

In the sequel, we shall construct BV functions by declaring the collection of their superlevel sets.
To this purpose, we give the following auxiliary definition:
\begin{definition}[The class \(\mathscr D(\X)\)]
Let \((\X,\sfd,\mm)\) be a metric measure space. Then we denote by \(\mathscr D(\X)\) the collection of all
couples \(A=(\Delta_A,\{A_t:t\in\Delta_A\})\) such that:
\begin{itemize}
\item[\(\rm i)\)] \(\Delta_A\subseteq(0,\infty)\) is a Borel set such that \(\mathscr L^1((0,\infty)\setminus\Delta_A)=0\).
\item[\(\rm ii)\)] Each \(A_t\), \(t\in\Delta_A\), is a Borel subset of \(\X\) and \(A_s\subseteq A_t\) for every
\(t,s\in\Delta_A\) with \(t<s\).
\item[\(\rm iii)\)] For any \(x\in\X\) there exists \(r_x>0\) such that \(\int_0^\infty\mm(A_t\cap B(x,r_x))\,\d t<\infty\).
\end{itemize}
\end{definition}

Item iii) is well posed, because \(\Delta_A\ni t\mapsto\mm(A_t\cap B(x,r_x))\) is non-increasing and thus Borel.
Clearly, if \(f\colon\X\to[0,+\infty)\) is locally integrable, then
\(((0,\infty),\{\{f>t\}:t>0\})\in\mathscr D(\X)\). In the next result we prove the converse,
namely that each element \(A\) of \(\mathscr D(\X)\) induces a function \(\phi_A\) whose superlevel
sets are those prescribed by \(A\).
\begin{proposition}\label{prop:from_A_to_phi_A}
Let \((\X,\sfd,\mm)\) be a metric measure space and \(A\in\mathscr D(\X)\). Then there exists a Borel function
\(\phi_A\colon\X\to[0,\infty)\) such that
\begin{equation}\label{eq:from_A_to_phi_A}
\{\phi_A>t\}=_\mm A_t\quad\text{ for all but countably many }t\in\Delta_A.
\end{equation}
Moreover, the function \(\phi_A\) is locally integrable and \(\mm\)-a.e.\ unique.
\end{proposition}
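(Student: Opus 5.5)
The plan is to build \(\phi_A\) by the layer-cake formula, using a countable dense set of levels, and then compare its superlevel sets with the \(A_t\) through monotonicity.

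\textbf{Construction.} Fix a countable set \(D\subseteq\Delta_A\) that is dense in \((0,\infty)\); such a set exists because \(\Delta_A\) has full measure, hence is dense. Define
\[
\phi_A(x)\coloneqq\sup\{t\in D:\,x\in A_t\},
\]
with \(\sup\varnothing\coloneqq 0\). For every \(s\geq 0\) we have \(\{\phi_A>s\}=\bigcup_{t\in D,\,t>s}A_t\), which is a countable union of Borel sets. Hence \(\phi_A\) is Borel with values in \([0,\infty]\).

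\textbf{Finiteness and local integrability.} Fix \(x\) and \(r_x\) as in item iii). Since \(t\mapsto\mm(A_t\cap B(x,r_x))\) is non-increasing and integrable on \(\Delta_A\), which has full measure, it tends to \(0\). So \(\mm(\{\phi_A=\infty\}\cap B(x,r_x))\leq\inf_{t\in D}\mm(A_t\cap B(x,r_x))=0\). Redefine \(\phi_A\coloneqq 0\) on this null set; by separability, countably many balls \(B(x,r_x)\) cover \(\X\). The layer-cake formula gives
\[
\int_{B(x,r_x)}\phi_A\,\d\mm=\int_0^\infty\mm(\{\phi_A>s\}\cap B(x,r_x))\,\d s .
\]
This is bounded by \(\int_0^\infty\mm(A_s\cap B(x,r_x))\,\d s<\infty\) once the comparison below is established for a.e.\ \(s\).

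\textbf{Identification of superlevel sets (main step).} For \(s\in\Delta_A\) set \(A^+_s\coloneqq\bigcup_{t\in D,t>s}A_t\) and \(A^-_s\coloneqq\bigcap_{t\in D,t<s}A_t\). By monotonicity,
\[
A^+_s\subseteq A_s\subseteq A^-_s\quad\text{and}\quad \{\phi_A>s\}=A^+_s .
\]
Localize in a ball \(B=B(x,r_x)\), where all masses are finite for \(s>0\). The functions
\[
g_+(s)\coloneqq\mm(A^+_s\cap B),\qquad g_-(s)\coloneqq\mm(A^-_s\cap B)
\]
are monotone on \((0,\infty)\). For \(s<s'\) in \(\Delta_A\) we have \(A^-_{s'}\subseteq A_{s''}\subseteq A^+_s\) for any \(s''\in D\cap(s,s')\). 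Therefore \(g_-(s')\leq g_+(s)\), and both \(g_\pm\) are squeezed between the one-sided limits of the non-increasing function \(g(s)\coloneqq\mm(A_s\cap B)\).

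At every continuity point of \(g\), all three values coincide. A monotone function has only countably many discontinuities, so
\[
\mm\big((A_s\setminus A^+_s)\cap B\big)=g(s)-g_+(s)=0
\]
outside a countable set of \(s\). Taking the union over countably many balls covering \(\X\) yields \eqref{eq:from_A_to_phi_A} for all but countably many \(t\in\Delta_A\). The delicate points are exactly here: the finiteness of the localized measures, so that measures can be subtracted, and the need to work ball by ball. I expect this step to be the main obstacle.

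\textbf{Uniqueness.} Let \(\psi\geq 0\) be Borel with \(\{\psi>t\}=_\mm A_t=_\mm\{\phi_A>t\}\) for a.e.\ \(t>0\). Pick a countable dense set of such \(t\). Then
\[
\{\phi_A>\psi\}=\bigcup_{t}\{\phi_A>t\}\setminus\{\psi>t\},
\]
a countable union of null sets, so it is \(\mm\)-null. By symmetry \(\{\psi>\phi_A\}\) is also null, hence \(\psi=\phi_A\) \(\mm\)-a.e.
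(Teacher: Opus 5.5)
Your proof is correct. It shares the paper's skeleton: sandwich the superlevel sets of \(\phi_A\) between \(A_t^+\) and \(A_t^-\), then show these agree up to null sets off a countable set of levels. The three ingredients, however, are implemented differently.

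\textbf{Construction.} The paper builds \(\phi_A\) as the monotone limit of step functions \(\phi_A^n=\sum_i t^n_i\1_{A_{t^n_i}\setminus A_{t^n_{i+1}}}\) over refining grids. This only yields \(\{\phi_A>t\}\subseteq A_t^+\subseteq A_t^-\subseteq\{\phi_A\geq t\}\). It therefore needs the extra observation that \(\{\phi_A=t\}\) is null for all but countably many \(t\), and the restriction \(t\notin\{t^n_i\}\). Your definition \(\phi_A(x)=\sup\{t\in D:x\in A_t\}\) gives \(\{\phi_A>s\}=A_s^+\) exactly. The small price is discarding the null set \(\bigcap_{t\in D}A_t\), where \(\phi_A=\infty\).

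\textbf{Countability of exceptional levels.} The paper notes that the sets \(A_t^-\setminus A_t^+\), \(t\in\Delta_A\), are pairwise disjoint, because \(A_s^-\subseteq A_t^+\) for \(t<s\). By \(\sigma\)-finiteness of \(\mm\), only countably many of them have positive measure. This needs no localization and no subtraction of measures. Your route goes through the jumps of \(s\mapsto\mm(A_s\cap B(x,r_x))\). It requires the finiteness of these masses from item iii), and a Lindel\"of covering of \(\X\) by countably many balls \(B(x,r_x)\); that is how your ``by separability'' should be read, since the radii depend on the centres.

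\textbf{Uniqueness.} Here your argument is more elementary than the paper's. The paper passes through \(h=\min\{\phi_A,\tilde\phi_A\}\) and the layer cake formula on balls, which uses local integrability. Your identity \(\{\phi_A>\psi\}=\bigcup_t\{\phi_A>t\}\setminus\{\psi>t\}\), taken over a countable dense set of good levels, needs no integrability at all. It also works when the superlevel sets agree only for a.e.\ \(t\), not just for all but countably many.
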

\begin{proof}
Recursively, we take strictly increasing sequences \((t^n_i)_{i\in\N}\subseteq\Delta_A\), for \(n\in\N\), such that:
\begin{itemize}
\item \(\{t^n_i:i\in\N\}\subseteq\{t^{n+1}_i:i\in\N\}\) for every \(n\in\N\),
\item \(|t^n_i-t^n_{i+1}|<2^{-n}\) for every \(i,n\in\N\),
\item \(\lim_{n\to\infty}t^n_1=0\),
\item \(\lim_{i\to\infty}t^n_i=\infty\) for every \(n\in\N\).
\end{itemize}
We then define the functions \(\phi_A^n\colon\X\to[0,+\infty)\) as
\[
\phi_A^n\coloneqq\sum_{i=1}^\infty t^n_i\1_{A_{t^n_i}\setminus A_{t^n_{i+1}}}.
\]
Since \((\phi_A^n(x))_n\) is non-decreasing for every \(x\in\X\), it makes sense to define \(\phi_A(x)\coloneqq\lim_n\phi_A^n(x)\) for every \(x\in\X\).
Clearly, \(\phi_A\) is Borel measurable by construction. Next, let us define
\[
A_t^+\coloneqq\bigcup_{s\in\Delta_A\cap(t,+\infty)}A_s,\qquad A_t^-\coloneqq\bigcap_{s\in\Delta_A\cap(0,t)}A_s
\quad\text{ for every }t\in\Delta_A.
\]
Each set \(A_t^\pm\) is Borel, as it can be expressed as a countable union/intersection of \(A_s\)'s. Since the sets
\(\{A_t^-\setminus A_t^+:t\in\Delta_A\}\) are pairwise disjoint and \(\mm\) is \(\sigma\)-finite, we have \(\mm(A_t^-\setminus A_t^+)=0\),
thus \(A_t=_\mm A_t^-=_\mm A_t^+\), and \(\{\phi_A=t\}=_\mm\varnothing\)
for all but countably many \(t\in\Delta_A\). Fix any such \(t\) that in addition does not belong to \(\{t^n_i:i,n\in\N\}\).
Take \(n_t\in\N\) such that \(t^n_1<t\) for every \(n\geq n_t\). Then for any \(n\geq n_t\) there exists a unique index \(i_n\in\N\)
such that \(t^n_{i_n}<t<t^n_{i_n+1}\). Note that \(\lim_n t^n_{i_n}=\lim_n t^n_{i_n+1}=t\). On the one hand, we have
that \(A_{t^n_i}\subseteq\{\phi_A^n\geq t^n_i\}\subseteq\{\phi_A\geq t^n_i\}\) for every \(i,n\in\N\), thus accordingly
\begin{equation}\label{eq:from_A_to_phi_A_aux}
A_t^-=\bigcap_{n\geq n_t}A_{t^n_{i_n}}\subseteq\bigcap_{n\geq n_t}\{\phi_A\geq t^n_{i_n}\}=\{\phi_A\geq t\}.
\end{equation}
On the other hand, if \(x\in\{\phi_A>t\}\) is given, then we can find \(n(x)\in\N\) with \(n(x)\geq n_t\) such that
\(\phi_A^{n(x)}(x)>t\), thus \(x\in A_{t^{n(x)}_{i_{n(x)}+1}}\). This fact, together with \eqref{eq:from_A_to_phi_A_aux},
implies that
\[
\{\phi_A>t\}\subseteq\bigcup_{n\geq n_t}A_{t^n_{i_n+1}}=A_t^+\subseteq A_t^-\subseteq\{\phi_A\geq t\}.
\]
Since \(\{\phi_A\geq t\}\setminus\{\phi_A>t\}=\{\phi_A=t\}=_\mm\varnothing\)
thanks to our choice of \(t\), we finally obtain \eqref{eq:from_A_to_phi_A}.

To conclude, it remains to show \(\mm\)-a.e.\ uniqueness. Suppose \(\tilde\phi_A\) is another function verifying the claim.
Then \(h\coloneqq\min\{\phi_A,\tilde\phi_A\}\) verifies the claim, as \(\{h>t\}=\{\phi_A>t\}\cap\{\tilde\phi_A>t\}=_\mm A_t\)
for all but countably many \(t\in\Delta_A\). Since \((\X,\sfd)\) is separable, we can find a sequence \((B_k)_{k\in\N}\)
of open balls in \(\X\) such that \(\X=\bigcup_{k\in\N}B_k\) and \(\int_0^\infty\mm(A_t\cap B_k)\,\d t<+\infty\)
for every \(k\in\N\). Therefore, the layer cake representation formula yields
\[\begin{split}
\int_{B_k}h\,\d\mm&=\int_0^\infty\mm(\{h>t\}\cap B_k)\,\d t=\int_0^\infty\mm(A_t\cap B_k)\,\d t=\int_0^\infty\mm(\{\phi_A>t\}\cap B_k)\,\d t\\
&=\int_{B_k}\phi_A\,\d\mm,
\end{split}\]
and similarly \(\int_{B_k}h\,\d\mm=\int_{B_k}\tilde\phi_A\,\d\mm\), for every \(k\in\N\). This shows that \(\phi_A=h=\tilde\phi_A\) holds \(\mm\)-a.e.\ on \(\X\)
and that \(\phi_A\) is locally integrable.
\end{proof}

For any \(A\in\mathscr D(\X)\), it follows from the coarea formula that
\[
|D\phi_A|(\X)<\infty\quad\Longleftrightarrow\quad\int_0^\infty P(A_t)\,\d t<\infty.
\]
In particular, if \(|D\phi_A|(\X)<\infty\), then \(A_t\) is a set of finite perimeter for a.e.\ \(t\in\Delta_A\).
\subsection{Construction of the decomposition}
We now turn to the auxiliary results needed to prove our main result, Theorem \ref{thm:main}.
\begin{lemma}\label{lem:additiv_TV}
Let \((\X,\sfd,\mm)\) be a metric measure space. Let \(f,g\in L^1_{loc}(\X)\) be such that \(0\leq f\leq g\)
and \(|Df|(\X),|Dg|(\X)<\infty\). Assume that there exists \(\sigma\in\{-1,1\}\) such that
\begin{equation}\label{eq:additiv_TV}
P(\{g>t\})=P(\{f>t\})+\sigma P(\{g>t\}\setminus\{f>t\})\quad\text{ for a.e.\ }t>0.
\end{equation}
Then it holds that \(|Dg|=|Df|+\sigma|D(g-f)|\).
\end{lemma}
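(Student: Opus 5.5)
The plan is to first upgrade the hypothesis \eqref{eq:additiv_TV}, which only concerns total perimeters, to an identity between perimeter \emph{measures} at a.e.\ level. Then I integrate in \(t\) via the coarea formula. The cross term is handled by a convexity (Minkowski-type) inequality for the total variation, closed up by a squeezing argument.

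\textbf{Step 1 (pointwise in \(t\)).} Fix \(t>0\) outside a null set, write \(G_t\coloneqq\{g>t\}\) and \(F_t\coloneqq\{f>t\}\). Since \(f\leq g\), we have \(F_t\subseteq G_t\), and \(G_t\), \(F_t\) have finite perimeter. From \(\1_{G_t}=\1_{F_t}+\1_{G_t\setminus F_t}\) and \(\1_{F_t}=\1_{G_t}-\1_{G_t\setminus F_t}\), subadditivity of the total variation gives, as measures,
\[
P(G_t,\cdot)\leq P(F_t,\cdot)+P(G_t\setminus F_t,\cdot),\qquad P(F_t,\cdot)\leq P(G_t,\cdot)+P(G_t\setminus F_t,\cdot).
\]
These also show that \(G_t\setminus F_t\) has finite perimeter, since \(P(G_t\setminus F_t)\leq P(G_t)+P(F_t)\).
\begin{itemize}
\item If \(\sigma=1\), the hypothesis says the first inequality is an equality of total masses.
\item If \(\sigma=-1\), it says the second one is.
\end{itemize}
A non-negative finite measure with zero total mass vanishes. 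Hence \(P(G_t,\cdot)=P(F_t,\cdot)+\sigma P(G_t\setminus F_t,\cdot)\) as measures, for a.e.\ \(t>0\). In particular \(\int_0^\infty P(G_t\setminus F_t)\,\d t\leq|Dg|(\X)+|Df|(\X)<\infty\), and the measurability in \(t\) of \(t\mapsto P(G_t\setminus F_t,E)\) follows from this identity and the coarea formula for \(f,g\).

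\textbf{Step 2 (integration).} Integrating in \(t\) and using the coarea formula for \(g\) and for \(f\) (both nonnegative, so only \(t>0\) matters) yields
\[
|Dg|=|Df|+\sigma\mu,\qquad\text{where }\mu\coloneqq\int_0^\infty P(G_t\setminus F_t,\cdot)\,\d t .
\]

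\textbf{Step 3 (key inequality).} Since \(g-f=\int_0^\infty(\1_{G_t}-\1_{F_t})\,\d t=\int_0^\infty\1_{G_t\setminus F_t}\,\d t\) pointwise, I claim that
\[
|D(g-f)|\leq\mu\quad\text{as measures.}
\]
I expect this to be the main technical step. The plan is as follows.
\begin{itemize}
\item Fix an open set \(\Omega\) and approximate \(g-f\) by Riemann-type sums \(\sum_i\delta\,\1_{G_{t_i}\setminus F_{t_i}}\), choosing the nodes \(t_i\) by averaging over shifts of the grid. These converge in \(L^1_{loc}\); the convergence is dominated by \(g\in L^1_{loc}\).
\item Along the chosen grids, \(\sum_i\delta P(G_{t_i}\setminus F_{t_i},\Omega)\to\mu(\Omega)\).
\item Subadditivity together with lower semicontinuity of \(|D\cdot|(\Omega)\) (applied on the open set \(\Omega\)) then gives \(|D(g-f)|(\Omega)\leq\mu(\Omega)\).
\item Outer regularity of both measures extends the inequality to all Borel sets.
\end{itemize}
Alternatively, one could use the representation in Theorem \ref{thm:repr_per} and Fubini on \(\int(g-f)\,{\rm div}(b)\,\d\mm\), but that requires \(L^1\) integrability, so I would favor the approximation route.

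\textbf{Step 4 (conclusion by squeezing).}
\begin{itemize}
\item If \(\sigma=1\): \(|Dg|=|Df|+\mu\geq|Df|+|D(g-f)|\geq|Dg|\), the last inequality by subadditivity since \(g=f+(g-f)\). Thus all inequalities are equalities, and \(|Dg|=|Df|+|D(g-f)|\).
\item If \(\sigma=-1\): \(|Df|=|Dg|+\mu\geq|Dg|+|D(g-f)|\geq|Df|\), since \(f=g-(g-f)\). Hence \(|Dg|=|Df|-|D(g-f)|\).
\end{itemize}
All measures involved are finite, so these equalities of measures are legitimate.
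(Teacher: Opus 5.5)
Your proof is correct, but it obtains the key inequality $|D(g-f)|\leq\int_0^\infty P(\{g>t\}\setminus\{f>t\},\cdot)\,\d t$ by a different route from the paper.

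\emph{How the paper does it.} The paper proves only the total-mass version of this inequality, and it does so by duality. This is exactly the route you set aside: Theorem \ref{thm:repr_per} plus Fubini on $\int\eta_k(g_n-f_n)\,{\rm div}(h^k_jb)\,\d\mm$. The paper gets around the integrability issue in four moves:
\begin{itemize}
\item it truncates $f,g$ at level $n$;
\item it multiplies by Lipschitz cutoffs $\eta_k$;
\item it approximates $\1_{B_k}$ from below by functions $h^k_j$;
\item it lets $j,k,n\to\infty$, using Proposition \ref{prop:TV_conv_truncated_functions} for the last limit.
\end{itemize}
It then closes with the same squeeze as your Step 4, but only at the level of total masses. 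By subadditivity, $|D(g-f)|-\sigma(|Dg|-|Df|)$ is a non-negative measure, and its total mass is non-positive, so it vanishes.

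\emph{Your Step 1 is not needed.} You use it only to get measurability of $t\mapsto P(G_t\setminus F_t,\Omega)$ for general open $\Omega$. Running Step 3 with $\Omega=\X$ alone and squeezing at total mass would suffice, and then the hypothesis itself gives the needed measurability.

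\emph{Your Step 3 is even cleaner than you present it.} Since $G_t\setminus F_t=\{f\leq t<g\}$, the sum $\sum_{i\geq0}\delta\1_{G_{s+i\delta}\setminus F_{s+i\delta}}(x)$ just counts grid points in $[f(x),g(x))$. It therefore differs from $(g-f)(x)$ by at most $\delta$, for every shift $s\in[0,\delta)$, so the convergence is uniform. Averaging over $s$ is needed only to pick a shift whose Riemann sum of $t\mapsto P(G_t\setminus F_t,\Omega)$ is at most $\mu(\Omega)$. Only this upper bound matters; you do not need the convergence to $\mu(\Omega)$ that you assert.

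\emph{Comparison.} Your argument uses nothing beyond the relaxation definition, namely subadditivity and lower semicontinuity on open sets. It needs no derivations and no truncations, and it gives the measure inequality directly. The paper's argument is the dual form of the same Minkowski-type inequality, and the price of working on the dual side is the integrability bookkeeping.
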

\begin{proof}
Define \(g_n\coloneqq(g\wedge n)\vee(-n)\) and \(f_n\coloneqq(f\wedge n)\vee(-n)\) for every \(n\in\N\). Note that
\[
\{g_n>t\}=\left\{\begin{array}{lll}
\X\\
\{g>t\}\\
\varnothing
\end{array}\quad\begin{array}{lll}
\text{ for every }t\in(-\infty,-n),\\
\text{ for every }t\in[-n,n),\\
\text{ for every }t\in[n,\infty),
\end{array}\right.
\]
and similarly for \(\{f_n>t\}\). Accordingly, \eqref{eq:additiv_TV} implies that for any \(n\in\N\) it holds that
\begin{equation}\label{eq:additiv_TV_aux}
P(\{g_n>t\})=P(\{f_n>t\})+\sigma P(\{g_n>t\}\setminus\{f_n>t\})\quad\text{ for a.e.\ }t>0.
\end{equation}
Now, fix a point \(\bar x\in\X\), and denote by \(B_k\) the open ball of center \(\bar x\) and radius \(k\in\N\).
For any \(k\in\N\), take a boundedly-supported Lipschitz function \(\eta_k\colon\X\to[0,1]\) with \(\eta_k=1\)
in \(B_k\). Note that \(\eta_k(g_n-f_n)\in{\rm BV}(\X)\) by the Leibniz rule. Likewise,
\(\eta_k\1_{\{g_n>t\}\setminus\{f_n>t\}}\in{\rm BV}(\X)\) for a.e.\ \(t>0\) by the coarea formula and
the Leibniz rule. Now, fix a derivation \(b\in{\rm Der}^\infty_\infty(\X)\) with \(|b|\leq 1\).
Take a sequence of functions \((h^k_j)_{j\in\N}\subseteq{\rm LIP}(\X;[0,1])\)
such that \(h^k_j(x)\nearrow\1_{B_k}(x)\) as \(j\to\infty\) for every \(x\in\X\)
and \({\rm spt}(h^k_j)\subseteq B_k\) for every \(j\in\N\).
Note that \(h^k_j b\) belongs to \({\rm Der}^\infty_\infty(\X)\) and \(|h^k_j b|\leq\1_{{\rm spt}(h^k_j)}\).
The layer cake representation formula ensures that
\[
g_n(x)-f_n(x)=\int_0^\infty\1_{\{g_n>t\}}(x)\,\d t-\int_0^\infty\1_{\{f_n>t\}}(x)\,\d t
=\int_0^\infty\1_{\{g_n>t\}\setminus\{f_n>t\}}(x)\,\d t
\]
for \(\mm\)-a.e.\ \(x\in\X\). Taking also Fubini's theorem, the coarea formula, \eqref{eq:ptwse_ineq_TV_BV_der}, the Leibniz rule
and \eqref{eq:additiv_TV_aux} into account, we deduce that
\[\begin{split}
&\big|{\bf L}_{\eta_k(g_n-f_n)}(h^k_j b)(\X)\big|=\bigg|\int\eta_k(g_n-f_n)\,{\rm div}(h^k_j b)\,\d\mm\bigg|\\
=\,&\bigg|\int_0^\infty\!\!\!\int\eta_k\1_{\{g_n>t\}\setminus\{f_n>t\}}\,{\rm div}(h^k_j b)\,\d\mm\,\d t\bigg|
=\bigg|\int_0^\infty{\bf L}_{\eta_k\1_{\{g_n>t\}\setminus\{f_n>t\}}}(h^k_j b)(\X)\,\d t\bigg|\\
\leq\,&\int_0^\infty|D(\eta_k\1_{\{g_n>t\}\setminus\{f_n>t\}})|({\rm spt}(h^k_j))\,\d t
\leq\int_0^\infty|D(\eta_k\1_{\{g_n>t\}\setminus\{f_n>t\}})|(B_k)\,\d t\\
\leq\;&\int_0^\infty\!\!\!\int_{B_k}\eta_k\,\d P(\{g_n>t\}\setminus\{f_n>t\},\cdot)\,\d t
+\int_0^\infty\!\!\!\int_{B_k}\1_{\{g_n>t\}\setminus\{f_n>t\}}\,\lip_a(\eta_k)\,\d\mm\,\d t\\
\leq\,&\int_0^\infty P(\{g_n>t\}\setminus\{f_n>t\})\,\d t=\sigma\int_0^\infty P(\{g_n>t\})-P(\{f_n>t\})\,\d t\\
=\,&\sigma|Dg_n|(\X)-\sigma|Df_n|(\X).
\end{split}\]
By applying the monotone convergence theorem, we obtain that
\[\begin{split}
\big|{\bf L}_{\eta_k(g_n-f_n)}(b)(B_k)-{\bf L}_{\eta_k(g_n-f_n)}(h^k_j b)(\X)\big|
&=\bigg|\int\1_{B_k}-h^k_j\,\d{\bf L}_{\eta_k(g_n-f_n)}(b)\bigg|\\
&\leq\int\1_{B_k}-h^k_j\,\d|{\bf L}_{\eta_k(g_n-f_n)}(b)|\to 0
\end{split}\]
as \(j\to\infty\). Therefore, given any \(n,k\in\N\), it follows from the above estimates that
\begin{equation}\label{eq:additiv_TV_aux2}
|{\bf L}_{\eta_k(g_n-f_n)}(b)(B_k)|\leq\sigma|Dg_n|(\X)-\sigma|Df_n|(\X)
\quad\text{ for every }b\in{\rm Der}^\infty_\infty(\X)\text{ with }|b|\leq 1.
\end{equation}
By Theorem \ref{thm:repr_per}, we can find \(b_{n,k}\in{\rm Der}^\infty_\infty(\X)\) with \(|b_{n,k}|\leq 1\)
such that \({\rm spt}(b_{n,k})\Subset B_k\) and
\begin{equation}\label{eq:additiv_TV_aux3}
|{\bf L}_{\eta_k(g_n-f_n)}(b_{n,k})(\X)|\geq\int_{B_k}\eta_k(g_n-f_n)\,{\rm div}(b_{n,k})\,\d\mm
\geq|D(\eta_k(g_n-f_n))|(B_k)-\frac{1}{k}.
\end{equation}
Moreover, we have that \({\bf L}_{\eta_k(g_n-f_n)}(b_{n,k})(\X)={\bf L}_{\eta_k(g_n-f_n)}(b_{n,k})(B_k)\),
because \eqref{eq:ptwse_ineq_TV_BV_der} implies that
\[
\big|{\bf L}_{\eta_k(g_n-f_n)}(b_{n,k})(\X\setminus B_k)\big|
\leq|D(\eta_k(g_n-f_n))|({\rm spt}(b_{n,k})\setminus B_k)=0.
\]
Combining \eqref{eq:additiv_TV_aux2} with \eqref{eq:additiv_TV_aux3}, we thus obtain that
\[
|D(g_n-f_n)|(B_k)=|D(\eta_k(g_n-f_n))|(B_k)\leq\sigma|Dg_n|(\X)-\sigma|Df_n|(\X)+\frac{1}{k}
\quad\text{ for all }n,k\in\N.
\]
Letting \(k\to\infty\), we deduce that \(|D(g_n-f_n)|(\X)\leq\sigma|Dg_n|(\X)-\sigma|Df_n|(\X)\)
for every \(n\in\N\). By Proposition \ref{prop:TV_conv_truncated_functions}, letting \(n\to\infty\)
we get that \(|D(g-f)|(\X)\leq\sigma|Dg|(\X)-\sigma|Df|(\X)\). Since
\[
\sigma|Dg|-\sigma|Df|=\left\{\begin{array}{ll}
|Df|-|Dg|\leq|D(f-g)|=|D(g-f)|\\
|Dg|-|Df|\leq|D(g-f)|
\end{array}\quad\begin{array}{ll}
\text{ if }\sigma=-1,\\
\text{ if }\sigma=1,
\end{array}\right.
\]
we finally conclude that \(|D(g-f)|=\sigma|Dg|-\sigma|Df|\), whence the statement follows.
\end{proof}

Given a Borel function \(f\colon\X\to[0,\infty)\) such that \(|Df|(\X)<\infty\), we denote
by \(D_f\) the set of all \(t>0\) such that \(\{f>t\}\) is a set of finite perimeter. Recall
that \(\mathscr L^1([0,\infty)\setminus D_f)=0\).
\begin{proposition}\label{prop:extract_monot}
Let \((\X,\sfd,\mm)\) be an Ahlfors \(q\)-regular PI space with \(q>1\) and \(\mm(\X)=\infty\).
Fix any \(f\in{\rm FV}(\X)\setminus\{0\}\). Then there exists a monotone function
\(g\in{\rm FV}(\X)\setminus\{0\}\) such that
\[
|Df|=|D(f-g)|+|Dg|.
\]
\end{proposition}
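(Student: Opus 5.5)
The plan is to reduce to a nonnegative function, build the monotone piece \(g\) by choosing one essential connected component of each superlevel set and saturating it, and then verify the additivity \(|Df|=|D(f-g)|+|Dg|\) through Lemma \ref{lem:additiv_TV}.

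\textbf{Reduction to \(f\geq 0\).} Since \(f\neq 0\), up to replacing \(f\) with \(-f\) we may assume \(f^+\neq 0\). Here we use that if \(g\) is monotone then so is \(-g\): for a.e.\ \(t\), \(\{-g>t\}\) and \(\{-g\leq t\}\) coincide up to null sets with \(\{g\leq -t\}\) and \(\{g>-t\}\). Suppose we find a monotone \(g\) with \(0\leq g\leq f^+\), \(g\neq 0\) and \(|Df^+|=|D(f^+-g)|+|Dg|\). Then \(f-g=(f^+-g)-f^-\), where \(f^+-g\geq 0\) vanishes on \(\{f<0\}\). Hence \((f-g)^+=f^+-g\) and \((f-g)^-=f^-\), and the identity \(|Dh|=|Dh^+|+|Dh^-|\) gives
\[
|D(f-g)|+|Dg|=|D(f^+-g)|+|Df^-|+|Dg|=|Df^+|+|Df^-|=|Df|.
\]
So from now on we assume \(f\geq 0\). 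Since \(f\in L^{q/(q-1)}(\X)\), we have \(\mm(\{f>t\})<\infty\) for every \(t>0\).

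\textbf{Construction.} Fix \(t_0\in D_f\) with \(\mm(\{f>t_0\})>0\), and let \(E\) be an essential connected component of \(\{f>t_0\}\). For \(t\in D_f\cap(0,t_0)\), the set \(E\) is an indecomposable subset of \(\{f>t\}\). By maximality it lies in a unique essential connected component \(C_t\) of \(\{f>t\}\), and we set \(C_t:=\varnothing\) for \(t>t_0\).

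These sets are nested. If \(s<t<t_0\), then \(C_t\) is an indecomposable subset of \(\{f>s\}\) containing \(E\), so \(C_t\subseteq_\mm C_s\). By the properties of saturation recalled above, \(S_t:={\rm sat}(C_t)\) is indecomposable, \(\X\setminus S_t\) is the unique infinite-measure component of \(\X\setminus C_t\) (hence indecomposable), \(\mm(S_t)<\infty\), and \(S_t\subseteq_\mm S_s\). After discarding a null set of levels and modifying by null sets, both families belong to \(\mathscr D(\X)\); local integrability follows from \(\mm(S_t)\leq\mm(\{f>t\})\vee C_IP(C_t)^{q/(q-1)}\) and a truncation argument. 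Proposition \ref{prop:from_A_to_phi_A} then yields functions \(h\) and \(g\) with \(\{h>t\}=_\mm C_t\) and \(\{g>t\}=_\mm S_t\), and \(0\leq h\leq f\), \(h\leq g\).

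\textbf{Properties of \(g\) and \(h\).} Since \(P(\X\setminus S_t)\) equals the perimeter of one component of \(\X\setminus C_t\), we have \(P(S_t)\leq P(C_t)\leq P(\{f>t\})\). Hence \(|Dg|(\X),|Dh|(\X)\leq|Df|(\X)\). Because \(\mm(\{g>t\})<\infty\) for all \(t>0\), Theorem \ref{thm:BV_embed} gives \(c_g=0\), i.e.\ \(g\in\FV(\X)\). Moreover \(g\neq 0\), since \(\{g>t\}\supseteq_\mm E\) for \(t<t_0\). Finally, \(g\) is monotone: for a.e.\ \(t\) both \(S_t\) and \(\X\setminus S_t\) are indecomposable, and for \(t>t_0\) these sets are \(\varnothing\) and \(\X\).

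\textbf{Additivity.} Decomposition into components gives \(P(\{f>t\})=P(C_t)+P(\{f>t\}\setminus C_t)\). Lemma \ref{lem:additiv_TV} with \(\sigma=1\), applied to \(h\leq f\), yields \(|Df|=|Dh|+|D(f-h)|\).

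Next, \(S_t\setminus C_t\) is the union of the holes of \(C_t\). The components of \(\X\setminus C_t\) split \(P(C_t,\cdot)\), so \(P(C_t)=P(S_t)+P(S_t\setminus C_t)\). Lemma \ref{lem:additiv_TV} with \(\sigma=-1\), applied to \(h\leq g\), yields \(|Dh|=|Dg|+|D(g-h)|\). Hence
\[
|Df|=|Dg|+|D(g-h)|+|D(f-h)|,
\]
and since \(f-g=(f-h)-(g-h)\), it remains to prove
\[
|D(f-h)|+|D(g-h)|\leq|D(f-g)|,
\]
the reverse inequality being the triangle inequality. This is the main obstacle, because \(f-g\) changes sign and the two summands are not ordered.

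My first attempt is to compare them at the level of perimeter measures. For a.e.\ \(t\), the measure \(P(S_t\setminus C_t,\cdot)\) is carried by the part of the boundary of \(C_t\) facing the holes. On the other hand, \(P(\{f>t\}\setminus C_t,\cdot)\) and \(P(C_t,\cdot)\) add up to \(P(\{f>t\},\cdot)\), which should force the two measures to live on disjoint portions of the boundary. Integrating in \(t\), I expect \(|D(f-h)|\perp|D(g-h)|\). I would then get the inequality by localizing the bound \(|D(f-g)|\geq|D(f-h)|-|D(g-h)|\) on a set carrying \(|D(f-h)|\), and symmetrically on a set carrying \(|D(g-h)|\).

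If this fails, the fallback is to pick \(t_0\) and \(E\) so that the saturation step is compatible with \(f\) directly, replacing \(g\) by \(\min\{g,f\}\)-type modifications. The price is that monotonicity must then be rechecked.
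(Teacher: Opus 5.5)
The gap is at the final step, and it has an elementary fix. Your construction ($C_t$, $S_t={\rm sat}(C_t)$, $h=\phi_C$, $g=\phi_S$), the two applications of Lemma~\ref{lem:additiv_TV} with $\sigma=1$ and $\sigma=-1$, and the check that $g\in\FV(\X)\setminus\{0\}$ is monotone all match the paper.

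\textbf{The step you call the main obstacle.} Your proposal leaves this step unproved, and the route you suggest fails: $|D(f-h)|$ and $|D(g-h)|$ need not be mutually singular. Even if disjointness holds at each fixed level, it is lost after integrating in $t$. A hole boundary of $C_t$ can reappear at a higher level $s$ as the boundary of a component of $\{f>s\}$ other than $C_s$. For example, in $\R^2$ let $B_r$ be the disk of radius $r$ centred at the origin. Take $A=B_{3/2}\setminus\bar B_1$, a small far-away disk $K$, and a thin strip $R$ outside $B_1$ joining $A$ to $K$. Set $f=\max\{2\cdot\1_A,\1_R,2\cdot\1_K\}$, $t_0\in(1,2)$ and $E=K$. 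Then $g-h=\1_{B_1}$ and $f-h=\1_A+(2-t_0)\1_K$, so both measures charge $\partial B_1$. Your fallback $\min\{g,f\}$ does not help either, since its superlevel sets $S_t\cap\{f>t\}$ are decomposable in general.

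\textbf{The missing idea.} The inequality you need is the triangle inequality in the other direction. All measures here are finite, so your identity $|Df|=|Dg|+|D(g-h)|+|D(f-h)|$ together with $|Df|\le|D(f-g)|+|Dg|$ gives $|D(f-h)|+|D(g-h)|=|Df|-|Dg|\le|D(f-g)|$. Equivalently, $|D(f-g)|+|Dg|\le|D(f-h)|+|D(g-h)|+|Dg|=|Df|\le|D(f-g)|+|Dg|$. This sandwich is exactly how the paper concludes.

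\textbf{The reduction to $f\ge 0$.} This step needs the same device, because it assumes $g\le f^+$. Your $g$ does not satisfy this: it is positive on the holes of $C_t$, where $f$ may vanish or be negative (in the example, $g=1$ on $B_1$ while $f=0$). So the identities $(f-g)^+=f^+-g$ and $(f-g)^-=f^-$ fail. The paper instead runs the construction on the positive levels of $f$, so that $\phi_C\le f^+$, and closes everything in one chain:
\[
|Df|=|D(f^+-\phi_C)|+|D(\phi_C-g)|+|Dg|+|Df^-|\ge|D(f-g)|+|Dg|\ge|Df|.
\]
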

\begin{proof}
Up to replacing \(f\) with \(-f\), we can assume that \(\mm(\{f>0\})>0\), thus \(\mm(\{f>t_0\})>0\) holds
for some \(t_0\in D_f\). Fix some essential connected component \(C_{t_0}\) of \(\{f>t_0\}\). Given any
\(t\in D_f\cap(0,t_0)\), we have that \(\{f>t_0\}\subseteq_\mm\{f>t\}\), thus there exists
a unique essential connected component \(C_t\) of \(\{f>t\}\) such that \(C_{t_0}\subseteq_\mm C_t\). Moreover,
define \(S_t\coloneqq{\rm sat}(C_t)\) for every \(t\in D_f\cap(0,t_0]\) and \(C_t=S_t\coloneqq\varnothing\) for
every \(t\in D_f\cap(t_0,\infty)\). Without loss of generality, we can assume that \(C_s\subseteq C_t\)
and \(S_s\subseteq S_t\) for every \(t,s\in D_f\) with \(t<s\). Observe that
\[\begin{split}
\int_0^{t_0}\mm(C_t\cap B_1(x))\,\d t&\leq\int_0^\infty\mm(\{f>t\}\cap B_1(x))\,\d t=\int_{B_1(x)}f^+\,\d\mm\\
&\leq\mm(B_1(x))^{1/q}\,\|f\|_{L^{q/(q-1)}(\X)}<\infty\quad\text{ for every }x\in\X,
\end{split}\]
thus \(C\coloneqq(D_f,\{C_t:t\in D_f\})\in\mathscr D(\X)\).
As \(f\in L^{q/(q-1)}(\X)\) implies
\(\mm(C_t)\leq\mm(\{f>t\})<\infty\) for a.e.\ \(t\in D_f\),
we have that \(\phi_C\in L^{q/(q-1)}(\X)\)
and \(\mm(S_t)<\infty\) for a.e.\ \(t\in D_f\). In particular,
\[\begin{split}
\int_0^{t_0}\mm(S_t\cap B_1(x))\,\d t&\leq\mm(B_1(x))^{1/q}\int_0^{t_0}\mm(S_t)^{(q-1)/q}\,\d t
\leq C_I^{(q-1)/q}\mm(B_1(x))^{1/q}\int_0^{t_0}P(S_t)\,\d t\\
&\leq C_I^{(q-1)/q}\mm(B_1(x))^{1/q}\int_0^\infty P(\{f>t\})\,\d t<\infty\quad\text{ for every }x\in\X
\end{split}\]
by H\"{o}lder's inequality and the isoperimetric inequality.
Then \(S\coloneqq(D_f,\{S_t:t\in D_f\})\in\mathscr D(\X)\),
thus \(g\coloneqq\phi_S\in L^{q/(q-1)}(\X)\) by Theorem \ref{thm:BV_embed}.
Since \(\phi_C\leq f^+\) and \(P(\{f>t\})=P(\{f>t\}\setminus C_t)+P(C_t)\) for every \(t\in D_f\),
Lemma \ref{lem:additiv_TV} yields \(|Df^+|=|D(f^+-\phi_C)|+|D\phi_C|\).
Given that \(\phi_C\leq g\) and \(P(C_t)=P(S_t\setminus C_t)+P(S_t)\) for every \(t\in D_f\),
Lemma \ref{lem:additiv_TV} yields \(|D\phi_C|=|D(\phi_C-g)|+|Dg|\). Recalling that
\(|Df|=|Df^+|+|Df^-|\), we get
\[\begin{split}
|Df|&=|D(f^+-\phi_C)|+|D(\phi_C-g)|+|Dg|+|Df^-|\\
&\geq|D(f^+-g)|+|D(-f^-)|+|Dg|\geq|D(f-g)|+|Dg|\geq|Df|,
\end{split}\]
whence \(|Df|=|D(f-g)|+|Dg|\) follows. As \(g\neq 0\) by construction, the proof is complete.
\end{proof}

We assume the validity of the axiom \({\sf DC}_{\omega_1}\), i.e.\ the Dependent Choice indexed over
the first uncountable ordinal \(\omega_1\). Denoting by \(A^{<\omega_1}\) the collection of all
\(\alpha\)-sequences \((a_\beta)_{\beta<\alpha}\) in \(A\) for any ordinal \(\alpha<\omega_1\),
the axiom \({\sf DC}_{\omega_1}\) can be stated in the following way: if \(A\) is a non-empty set and
\(G\colon A^{<\omega_1}\twoheadrightarrow A\) is a multivalued map with non-empty values, then
there exists an \(\omega_1\)-sequence \((a_\alpha)_{\alpha<\omega_1}\) in \(A\) such that
\(a_\alpha\in G((a_\beta)_{\beta<\alpha})\) for every \(\alpha<\omega_1\). See
\cite{Jech,Gigli} for further details on this matter.
\begin{lemma}\label{lem:conseq_DC_omega_1}
Let \((\mathbb B,\|\cdot\|)\) be a Banach space and \(G\subseteq\mathbb B\setminus\{0\}\) a non-empty set.
Assume that for any \(v\in\mathbb B\setminus\{0\}\) there exists \(w\in G\) such that \(\|v\|=\|v-w\|+\|w\|\).
Then for any \(v\in\mathbb B\) there exists a sequence \((w_n)_{n\in\N}\subseteq G\cup\{0\}\)
such that \(\sum_{n=1}^\infty\|w_n\|=\|v\|\) and \(\sum_{n=1}^\infty w_n=v\).
\end{lemma}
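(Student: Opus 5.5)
We build, by transfinite recursion using \({\sf DC}_{\omega_1}\), a family \((w_\alpha)_{\alpha<\omega_1}\subseteq G\cup\{0\}\) together with partial sums, and we keep along the way the invariant
\[
\|v\|=\Big\|v-\sum_{\beta<\alpha}w_\beta\Big\|+\sum_{\beta<\alpha}\|w_\beta\|.
\]
Since each \(\alpha<\omega_1\) is countable, the sum over \(\beta<\alpha\) is a countable sum of non-negative reals bounded by \(\|v\|\). It therefore converges absolutely, and in any enumeration; this gives sense to \(v_\alpha\coloneqq v-\sum_{\beta<\alpha}w_\beta\).

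The recursion step is as follows. Given \((w_\beta)_{\beta<\alpha}\) satisfying the invariant at all earlier stages, we let \(G((w_\beta)_{\beta<\alpha})\) be the set of all \(w\in G\) with \(\|v_\alpha\|=\|v_\alpha-w\|+\|w\|\) if \(v_\alpha\neq 0\), and \(\{0\}\) if \(v_\alpha=0\). This set is non-empty by hypothesis. To make \(G\) everywhere defined on \((G\cup\{0\})^{<\omega_1}\), we declare it to be \(\{0\}\) on sequences violating the invariant or the summability condition.

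We then check that the invariant propagates. At successor stages it follows directly from the choice of \(w_\alpha\) and the triangle inequality bookkeeping:
\[
\|v\|=\|v_\alpha\|+\sum_{\beta<\alpha}\|w_\beta\|=\|v_{\alpha+1}\|+\|w_\alpha\|+\sum_{\beta<\alpha}\|w_\beta\|.
\]
At limit stages \(\alpha\), we choose a cofinal sequence \(\alpha_k\nearrow\alpha\). Then \(v_{\alpha_k}\to v_\alpha\) in norm, because the tails of an absolutely convergent series vanish, and the norms pass to the limit. So the invariant holds at every \(\alpha<\omega_1\).

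The key step, and the main obstacle, is showing that the process stops at a countable stage. The map \(\alpha\mapsto\sum_{\beta<\alpha}\|w_\beta\|\) is non-decreasing and bounded by \(\|v\|\). Moreover, whenever \(v_\alpha\neq0\) we have \(w_\alpha\in G\subseteq\mathbb B\setminus\{0\}\), so the sum strictly increases at stage \(\alpha+1\). A strictly increasing \(\omega_1\)-sequence of reals cannot exist, since choosing a rational between consecutive values would give an injection \(\omega_1\to\mathbb Q\). Hence there is a first \(\bar\alpha<\omega_1\) with \(v_{\bar\alpha}=0\).

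Finally, we enumerate the countable set \(\{w_\beta:\beta<\bar\alpha\}\) as \((w_n)_{n\in\N}\), padding with zeros if \(\bar\alpha\) is finite. The invariant at \(\bar\alpha\) gives \(\sum_n\|w_n\|=\|v\|\). Absolute convergence makes the sum independent of the enumeration, so \(\sum_n w_n=\sum_{\beta<\bar\alpha}w_\beta=v\). The case \(v=0\) is trivial: take all \(w_n=0\).
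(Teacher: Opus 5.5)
Your proof is correct and follows the paper's argument: a \({\sf DC}_{\omega_1}\)-recursion that preserves \(\|v\|=\|v-\sum_{\beta<\alpha}w_\beta\|+\sum_{\beta<\alpha}\|w_\beta\|\) through successor and limit stages, then a countability argument that yields a countable stage where the partial sum equals \(v\). The paper gets countability from the finiteness of \(\{\alpha:\|w_\alpha\|\geq 1/k\}\) and marks ill-defined inputs with an extra symbol \(\star\), whereas you use the nonexistence of a strictly increasing \(\omega_1\)-sequence of reals and the default value \(\{0\}\); both work, but in the last step you should enumerate the index set \(\bar\alpha\) rather than the set of values \(\{w_\beta:\beta<\bar\alpha\}\), since the same element of \(G\) may be chosen at several stages and its multiplicity must be kept.
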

\begin{proof}
Fix \(v\in\mathbb B\). Given \(u\in\mathbb B\), we call \(Y(u)\) the set of \(w\in G\) such
that \(\|u\|=\|u-w\|+\|w\|\). Clearly, \(Y(u)\neq\varnothing\) if and only if \(u\neq 0\).
We denote by \(\star\) a generic element not belonging to \(G\cup\{0\}\), and
we define \(E\colon(G\sqcup\{\star\})^{<\omega_1}\twoheadrightarrow(G\cup\{0\})\sqcup\{\star\}\) as
\[
E((w_\beta)_{\beta<\alpha})\coloneqq\left\{\begin{array}{lll}
\star\\
Y(v-\sum_{\beta<\alpha}w_\beta)\\
\{0\}
\end{array}\quad\begin{array}{lll}
\text{ if }\star\in(w_\beta)_{\beta<\alpha}\text{ or }\sum_{\beta<\alpha}\|w_\beta\|=\infty,\\
\text{ if }\sum_{\beta<\alpha}\|w_\beta\|<\infty\text{ and }\sum_{\beta<\alpha}w_\beta\neq v,\\
\text{ if }\sum_{\beta<\alpha}\|w_\beta\|<\infty\text{ and }\sum_{\beta<\alpha}w_\beta=v.
\end{array}\right.
\]
By the axiom \({\sf DC}_{\omega_1}\), there exists an \(\omega_1\)-sequence
\((w_\alpha)_{\alpha<\omega_1}\subseteq(G\cup\{0\})\sqcup\{\star\}\) such that
\[
w_\alpha\in E((w_\beta)_{\beta<\alpha})\quad\text{ for every }\alpha<\omega_1.
\]
We claim that for any \(\alpha<\omega_1\) it holds that
\begin{equation}\label{eq:transfin_ind_cond}
\star\notin\{w_\beta\}_{\beta<\alpha},\qquad\sum_{\beta<\alpha}\|w_\beta\|<\infty,\qquad
\|v\|=\bigg\|v-\sum_{\beta<\alpha}w_\beta\bigg\|+\sum_{\beta<\alpha}\|w_\beta\|.
\end{equation}
We prove it via a transfinite induction argument. Trivially, \eqref{eq:transfin_ind_cond} holds
for \(\alpha=0\). Next, fix \(\bar\alpha<\omega_1\) and assume \eqref{eq:transfin_ind_cond} holds
for all \(\alpha<\bar\alpha\). If \(\bar\alpha\) is a successor ordinal, say \(\bar\alpha=\gamma+1\), then:
\begin{itemize}
\item either \(\sum_{\beta<\gamma}\|w_\beta\|<\infty\) and \(\sum_{\beta<\gamma}w_\beta=v\), so that
\(w_\gamma=0\) and \eqref{eq:transfin_ind_cond} holds for \(\alpha=\bar\alpha\);
\item or \(\sum_{\beta<\gamma}\|w_\beta\|<\infty\) and \(u\coloneqq v-\sum_{\beta<\gamma}w_\beta\neq 0\),
so that \(w_\gamma\in Y(u)\) and thus
\[
\|v\|=\|u\|+\sum_{\beta<\gamma}\|w_\beta\|=\|u-w_\gamma\|+\|w_\gamma\|+\sum_{\beta<\gamma}\|w_\beta\|
=\bigg\|v-\sum_{\beta<\bar\alpha}w_\beta\bigg\|+\sum_{\beta<\bar\alpha}\|w_\beta\|,
\]
which implies that \eqref{eq:transfin_ind_cond} holds for \(\alpha=\bar\alpha\).
\end{itemize}
If, instead, \(\bar\alpha\) is a limit ordinal, then \(\bar\alpha=\bigcup_{\alpha<\bar\alpha}\alpha\)
and thus \(\star\notin\{w_\beta\}_{\beta<\bar\alpha}\). Since \(\bar\alpha\) is at most countable,
we find \((\alpha_n)_{n\in\N}\) such that \(\alpha_n\leq\alpha_{n+1}<\bar\alpha\) for all \(n\in\N\)
and \(\bar\alpha=\bigcup_{n\in\N}\alpha_n\). Hence,
\(\sum_{\beta<\bar\alpha}\|w_\beta\|=\lim_n\sum_{\beta<\alpha_n}\|w_\beta\|\leq\|v\|<\infty\),
thus \(\sum_{\beta<\bar\alpha}w_\beta=\lim_n\sum_{\beta<\alpha_n}w_\beta\in\mathbb B\) exists and
\[
\bigg\|v-\sum_{\beta<\bar\alpha}w_\beta\bigg\|+\sum_{\beta<\bar\alpha}\|w_\beta\|=
\lim_{n\to\infty}\bigg(\bigg\|v-\sum_{\beta<\alpha_n}w_\beta\bigg\|+\sum_{\beta<\alpha_n}\|w_\beta\|\bigg)=\|v\|.
\]
All in all, we have shown that \eqref{eq:transfin_ind_cond} holds for \(\alpha=\bar\alpha\). By transfinite induction,
we deduce that \eqref{eq:transfin_ind_cond} holds for every \(\alpha<\omega_1\), proving the claim. Therefore,
\(\{\alpha<\omega_1:\|w_\alpha\|\geq 1/k\}\) is finite for every \(k\in\N\), so that
\(A\coloneqq\{\alpha<\omega_1:w_\alpha\neq 0\}=\bigcup_{k\in\N}\{\alpha<\omega_1:\|w_\alpha\|\geq 1/k\}\)
is at most countable and accordingly \(\gamma\coloneqq\sup A+1<\omega_1\). Given that \(w_\gamma=0\),
we conclude that \(\sum_{\beta<\gamma}w_\beta=v\) and thus also \(\sum_{\beta<\gamma}\|w_\beta\|=\|v\|\).
This proves the statement.
\end{proof}

We are now in a position to prove our main result.
\begin{proof}[Proof of Theorem \ref{thm:main}]
Proposition \ref{prop:extract_monot} implies that for any \(f\in{\rm FV}(\X)\setminus\{0\}\) there exists a monotone
function \(g\in{\rm FV}(\X)\setminus\{0\}\) such that \(\|f\|_{{\rm FV}(\X)}=\|f-g\|_{{\rm FV}(\X)}+\|g\|_{{\rm FV}(\X)}\).
By applying Lemma \ref{lem:conseq_DC_omega_1} with \(\mathbb B\coloneqq{\rm FV}(\X)\) and
\(G\coloneqq\{g\in{\rm FV}(\X)\setminus\{0\}:g\text{ is monotone}\}\), we thus obtain that for any given
\(f\in{\rm FV}(\X)\) there exists a sequence of monotone functions \((f_n)_{n\in\N}\subseteq{\rm FV}(\X)\)
such that \(\sum_{n=1}^\infty\|f_n\|_{{\rm FV}(\X)}=\|f\|_{{\rm FV}(\X)}\) and \(\sum_{n=1}^\infty f_n=f\)
absolutely in \({\rm FV}(\X)\).
Thanks to \eqref{eq:equiv_norm_FV}, we deduce that
\(\sum_{n=1}^\infty f_n=f\) absolutely in \(L^{q/(q-1)}(\X)\). In particular
\(|Df|\leq\sum_{n=1}^\infty|Df_n|\), which together with \(\sum_{n=1}^\infty|Df_n|(\X)=|Df|(\X)\)
imply that \(|Df|=\sum_{n=1}^\infty|Df_n|\), as desired.
\end{proof}
\end{document}